\DeclareMathOperator{\Hes}{\rm Hes}
\DeclareMathOperator{\Ric}{\rm Ric}
\newcommand{\nf}{\nabla f}
\newcommand{\KN}{\mathbin{\bigcirc\mspace{-15mu}\wedge\mspace{3mu}}}
\newtheorem{theorem}{Theorem}[section]
\newtheorem{lemma}[theorem]{Lemma}
\newtheorem{corollary}[theorem]{Corollary}
\newtheorem{example}[theorem]{Example}
\theoremstyle{definition}
\theoremstyle{remark}
\newtheorem{remark}[theorem]{Remark}
\newcommand\restr[2]{{
  \left.\kern-\nulldelimiterspace 
  #1 
  \vphantom{\big|} 
  \right|_{#2} 
  }}
\begin{document}
\title[Conformally weighted Einstein manifolds]{ Conformally weighted Einstein manifolds: the uniqueness problem}
\author{M. Brozos-V\'azquez, E. Garc\'ia-R\'io, D. Moj\'on-\'Alvarez}
\address{MBV: CITMAga, 15782 Santiago de Compostela, Spain}
\address{\phantom{MBV:}  
	Universidade da Coru\~na, Campus Industrial de Ferrol, Department of Mathematics, 15403 Ferrol,  Spain}
\email{miguel.brozos.vazquez@udc.gal}
\address{EGR: CITMAga, 15782 Santiago de Compostela, Spain}
\address{\phantom{EGR:}
	Department of Mathematics, Universidade de Santiago de Compostela,
	15782 Santiago de Compostela, Spain}
\email{eduardo.garcia.rio@usc.es}
\address{DMA: CITMAga, 15782 Santiago de Compostela, Spain}
\address{\phantom{DMA:}
	Department of Mathematics, Universidade de Santiago de Compostela,
15782 Santiago de Compostela, Spain}
\email{diego.mojon.alvarez@usc.es}
\thanks{Research partially supported by grants PID2022-138988NB-I00 funded by MICIU/AEI/10.13039/501100011033 and by ERDF, EU, and ED431C 2023/31 (Xunta de Galicia, Spain); and by contract FPU21/01519 (Ministry of Universities, Spain).}
\subjclass[2020]{53C21 53B20 53C18 53C24 53C25}
\date{}
\keywords{Smooth metric measure space, Bakry-\'Emery Ricci tensor, weighted Einstein manifold, weighted conformal class, warped product}

\maketitle

\begin{abstract} 
We discuss smooth metric measure spaces admitting two weighted Einstein representatives of the same weighted conformal class. First, we describe the local geometries of such manifolds in terms of certain Einstein and quasi-Einstein warped products.
Secondly, a global classification result is obtained when one of the underlying metrics is complete, showing that either it is a weighted space form, a special Einstein warped product, or a specific family of quasi-Einstein warped products. As a consequence, it must be a weighted sphere in the compact case.
\end{abstract}

\section{Introduction}

Let $\mathcal{M}=(M^n,g,f,m,\mu)$ be a {\it smooth metric measure space} ({\it SMMS} for short), i.e., an $n$-dimensional Riemannian manifold $(M,g)$ endowed with a weighted volume element $e^{-f} dvol_g$ defined by a density function $f\in C^\infty(M)$, a dimensional parameter $m\in \mathbb{R}^+$ and an auxiliary curvature parameter $\mu\in \mathbb{R}$. 

Geometric objects defined on $\mathcal{M}$ are motivated by the geometry on the base of the formal warped product
\[
M\times_v F^m (\mu)=(M\times F^m, g\oplus v^2  h^F),
\]
where $v=e^{-f/m}$ is the warping function and $F^m(\mu)=(F^m, h^F)$ is an $m$-dimensional fiber of constant sectional curvature $\mu$. The volume element on $\mathcal{M}$ is $e^{-f} dvol_g$, thus  corresponding to the restriction of the volume to the base.

The study of the geometry of SMMSs relies on the use of weighted tensors such as the {\it $m$-Bakry-\'Emery Ricci tensor}, defined as
\begin{equation}\label{eq:Bakry-Emery-Ricci-tensor}
	\rho^m_f=\rho+\operatorname{Hes}_f-\frac{1}{m} df\otimes df,
\end{equation}
and the {\it weighted scalar curvature}, given by
\begin{equation}\label{eq:Weighted-scalar-curvature}
	\tau^m_f=\tau+2\Delta f-\frac{m+1}{m}||\nf||^2+m(m-1)\mu \,e^{\frac{2}m f},
\end{equation}
where $\Delta f$ is the Laplacian of $f$. The Bakry-Émery Ricci tensor is the Ricci tensor of $M\times_v F^m (\mu)$ restricted to the base,
whereas the weighted scalar curvature is the scalar curvature of $M\times_v F^m (\mu)$, taken as a smooth function on $M$. Notice that  the parameter $m=1$ makes the value of $\mu$ irrelevant, so in this case we refer to the SMMS by the quadruple $(M,g,f,1)$. This distinguished value will appear in the subsequent analysis. 

The weighted scalar curvature plays a role in the definition of the {\it weighted Schouten tensor} 
\begin{equation}\label{eq:Weighted-Schouten-tensor}
	P^m_f=\frac{1}{n+m-2}(\rho^m_f-J^m_f g),
\end{equation}
where the {\it weighted Schouten scalar} is given by $J^m_f=\frac{1}{2(n+m-1)}\tau^m_f$. 

Motivated by their variational behavior, Case (see \cite{Case-JDG,Case-Sigmak}) introduced {\it weighted Einstein} SMMSs as those which satisfy
\begin{equation}\label{eq:weighted-Einstein-equation}
P^m_f=\lambda g,
\end{equation}
for some constant $\lambda\in\mathbb{R}$. For constant density functions, one recovers the notion of standard Einstein manifolds so, whenever the density function is constant, the SMMS is said to be {\it trivial}. 
Moreover, when the relation \eqref{eq:weighted-Einstein-equation} holds, there exists a unique  $\kappa\in \mathbb{R}$ such that $J_f^m=(m+n)\lambda -m\kappa e^{\frac{f}m}$  (see  \cite{Case-JDG,Case-Sigmak}  and Lemma~\ref{lemma:scale}). This constant is called {\it scale}. Noticeably,  for non-trivial SMMSs, $\kappa=0$ if and only if $J^m_f$ is constant (and hence so is the weighted scalar curvature), in which case the manifold is \emph{quasi-Einstein}, i.e., $\rho^m_f=\beta g$ for some $\beta\in\mathbb{R}$ (we refer to Section~\ref{sect:preliminaries} and references \cite{Case-JDG,Case-Sigmak} for details).

Two SMMSs $\mathcal{M}=(M^n,g,f,m,\mu)$ and $\widehat{\mathcal{M}}=(\widehat M^n,\widehat g,\widehat f,m,\mu)$ are {\it isometric} if there exists a Riemannian isometry $\psi:(M,g)\to (\widehat M,\widehat g)$ preserving the densities, i.e., $f=\widehat f\circ \psi$. Thus, we will identify isometric SMMSs without further mention if there is no possible confusion. As such, we say that two SMMSs  are {\it  conformally equivalent} if there exists a smooth function $\phi \in C^\infty(M)$ such that $\widehat g=e^{-2\phi/m}g$ and $\widehat f=f+ \phi$ (see  \cite{Case-Sigmak}).

The existence of weighted Einstein manifolds in a given conformal class of SMMSs arises as a natural question. Indeed, for two conformally equivalent SMMSs $\mathcal{M}$ and $\widehat{\mathcal{M}}$, taking $u=e^{\phi/m}$ and using the transformation formulae in \cite{Case-Sigmak}, one has
\begin{equation}\label{eq:conf-trans-law-P}
	\widehat{P}_{\widehat f}^m=P_f^m+u^{-1}\Hes_u-\frac{1}2 u^{-2}||\nabla u||^2g.
\end{equation}
Thus, the problem of finding conformal classes which admit weighted Einstein representatives entails finding a Riemannian metric $g$, a density function $f$ and a conformal function $u$ defined on $M$ so that \eqref{eq:conf-trans-law-P} is satisfied for $\widehat{P}_{\widehat f}^m=\widehat\lambda u^{-2} g$ and some constant $\widehat \lambda$. This translates into finding a solution to the PDE 
\begin{equation}\label{eq:conf-trans-law-P2}
P_f^m+u^{-1}\Hes_u=	u^{-2}\left(\widehat\lambda  +\frac{1}2 ||\nabla u||^2\right)g
\end{equation}
for a constant $\widehat \lambda$. 

This turns out to be an unmanageable problem in general. However, it is reasonable to address the problem of non-uniqueness, this is, to find weighted Einstein SMMSs that admit another weighted SMMS in their conformal class (excluding homotheties, i.e. rescalings of the metric by a constant factor). These problem is relevant, for example, in the study of weighted Yamabe-type problems (see \cite{Case-JDG,Case-Sigmak}).
First results in this regard were given in \cite{Case-JDG} for $\mu=0$. On the one hand, it was shown that if $M$ is compact then the SMMS is conformal to the standard sphere $(S^n,g,0,1,0)$. On the other hand, if the manifold is complete and the weighted Schouten tensor vanishes, then it is isometric to Euclidean space with a particular family of possible density functions. Additionally, some partial results were given in \cite{Case-Sigmak} for specific families of SMMSs. In this note, we generalize these results and give a complete answer to this problem, determining all conformal classes which admit more than one weighted Einstein representative.  

The problem we address in this note is somewhat motivated by the question of finding two Einstein metric in the same conformal class. This is a classical problem whose first contributions date back to Brinkmann \cite{Brinkmann}, and literature on the topic is extensive. See, for example, \cite{Yano-Nagano, Kuhnel}, where the latter includes a detailed review of the subject. We will see that, in fact, some of the solutions we obtain are built on Einstein manifolds which conformally transform into another Einstein manifold. Thus, we recover part of the known results from the context where the density is constant, although in our case there is, additionally, a change of density. We emphasize, however, that there are also weighted Einstein solutions whose underlying manifold is not Einstein (see Theorem~\ref{th:conf-class-2WE-complete} below).

We begin by discussing the local geometric structure of manifolds which admit two weighted Einstein structures in the same weighted conformal class. It is convenient to consider the density as the positive function $v=e^{-\frac{f}{m}}$, so that a conformal change transforms the metric and the density as $\widehat{g}=u^{-2}g$ and $\widehat{v}=u^{-1}v$, respectively. It is worth noting that, since conformal transformations can transform non-trivial manifolds into trivial ones and vice versa, it is useful to include the trivial case in our analysis.

%

\begin{theorem}\label{th:conf-class-2WE-local}
	Let $(M^n,g,f,m,\mu)$ be a weighted Einstein SMMS, with $P_f^m=\lambda g$, and such that there exists a conformally equivalent weighted Einstein SMMS $(M^n,\widehat{g},\widehat{f},m,\mu)$, with $\widehat{P}_f^m=\widehat{\lambda} \widehat{g}$. Then, on a neighborhood of each regular point of the conformal factor $u$, $M$ decomposes as a warped product $I\times_\varphi N$, where $I\subset \mathbb{R}$ is an open interval and $\nabla u$ is tangent to $I$. Furthermore, one of the following holds:
	\begin{enumerate}
		\item  $(M,g)$ and $(M,\widehat{g})$ are Einstein with $\rho=2(n-1)\lambda g$ and $\widehat{\rho}=2(n-1)\widehat{\lambda} g$, and the density takes the form $f=-m\log(\varphi v_N+\alpha)$, where $v_N$ is a function on $N$ and $\alpha$ is a function on $I$. 
		
		Moreover, the fiber  $(N,g^N)$ is Einstein and there exist constants $\xi,\nu$ determined by $v$ and $u$ such that $\Hes^N_{v_N}=(\xi-(\nu^2-4\lambda \widehat\lambda) v_N)g^N$.
		
		\item $(M^n,g,f,m)$ and $(M^n,\widehat{g},\widehat{f},m)$ are quasi-Einstein with $\rho_f^m=2(m+n-1)\lambda g$ and $\widehat{\rho}_f^m=2(m+n-1)\widehat{\lambda} \widehat{g}$, and the density $f$ splits as $f= -m\log(\varphi)+f_N$ where  $f_N$ is a function on $N$. 
		
		Moreover, the fiber $(N,g^N, f_N,m)$ is quasi-Einstein too.
	\end{enumerate}
\end{theorem}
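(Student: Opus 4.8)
The plan is to extract everything from the weighted conformal transformation law \eqref{eq:conf-trans-law-P2}. Since $P^m_f=\lambda g$, that identity reduces to the concircular equation
\[
\Hes_u=\sigma\,g,\qquad \sigma:=u^{-1}\widehat\lambda+\tfrac{1}{2}u^{-1}\|\nabla u\|^2-\lambda u ,
\]
so $\nabla u$ is a non-trivial closed conformal vector field on the open set where $u$ is regular. By the classical local structure theory for such vector fields (Brinkmann, Tashiro; see \cite{Brinkmann} and the survey \cite{Kuhnel}), each regular point of $u$ has a neighbourhood on which $g=dt^2+\varphi(t)^2g^N$ with $u=u(t)$, $\nabla u$ tangent to the interval factor $I$, and $\varphi$ proportional to $u'$; this is the first assertion, and I would normalise $\varphi=u'$. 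Integrating $\Hes_u=\sigma g$ once along $\nabla u$ gives $\|\nabla u\|^2=(u')^2=Cu-2\lambda u^2-2\widehat\lambda$ for a constant $C$, whence $u''=\tfrac{C}{2}-2\lambda u$ and $\varphi''=-2\lambda\varphi$; in particular $(\varphi')^2+2\lambda\varphi^2$ is the constant $\nu^2-4\lambda\widehat\lambda$, where $\nu:=C/2$.

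Next I would rewrite the weighted Einstein condition in the form that exposes the potential $v=e^{-f/m}$. Using the identity $\rho^m_f=\rho-\tfrac{m}{v}\Hes_v$ together with the scale $\kappa$ furnished by Lemma~\ref{lemma:scale}, equation \eqref{eq:weighted-Einstein-equation} becomes
\[
v\,\rho=2(n+m-1)\lambda\,v\,g+m\bigl(\Hes_v-\kappa g\bigr),
\]
with the analogous identity for $(\widehat g,\widehat v,\widehat\kappa)$, $\widehat v=u^{-1}v$. In the warped coordinates the mixed $(\partial_t,X)$--component of the left-hand side vanishes for $X$ tangent to $N$, forcing $\Hes_v(\partial_t,X)=0$; integrating this relation gives $v=\varphi\,v_N+\alpha$ for a function $v_N$ on $N$ and a function $\alpha$ on $I$. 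The dichotomy in the statement is then the dichotomy according to whether $\alpha/\varphi$ is constant on $I$: if it is, absorbing the constant into $v_N$ gives $v=\varphi v_N$, equivalently $f=-m\log\varphi+f_N$.

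It remains to substitute $v=\varphi v_N+\alpha$ into the $N$--block and the $tt$--block of the displayed equation and separate the $t$-- and $N$--dependence. If $\alpha/\varphi$ is non-constant, the coefficient of the (fixed) tensor $\rho^N$ genuinely varies with $t$ while the coefficient of $\Hes^N_{v_N}$ does not, and a short argument using the contracted second Bianchi identity on $N$ then forces $\rho^N$ to be a constant multiple of $g^N$, so $(N,g^N)$ is Einstein; the fibre equation collapses to $\Hes^N_{v_N}=(\xi-(\nu^2-4\lambda\widehat\lambda)v_N)g^N$, with $\xi$ a constant read off from $\kappa,\alpha,\varphi$. Plugging the ODEs that $\varphi$ and $\alpha$ must satisfy back in shows $\Hes_v=(\kappa-2\lambda v)g$, whence $\rho=2(n-1)\lambda g$; since $\Hes_u=\sigma g$ makes $\widehat\rho-\rho$ a pure-trace tensor, $(M,\widehat g)$ is Einstein as well, and the form of the density is immediate from $v=\varphi v_N+\alpha$. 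This is case (1).

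If instead $\alpha/\varphi$ is constant, so $v=\varphi v_N$, the same separation together with $\varphi''=-2\lambda\varphi$ and $\varphi'\not\equiv0$ forces $\kappa=0$; then $J^m_f$ is constant, so the SMMS is quasi-Einstein with $\rho^m_f=2(m+n-1)\lambda g$, and the fibre equation becomes $\rho^N-\tfrac{m}{v_N}\Hes^N_{v_N}=\beta g^N$, i.e. $(N,g^N,f_N,m)$ is quasi-Einstein; this is case (2). The hatted structure is handled by the same argument with the roles of $g$ and $\widehat g$ exchanged, since $\widehat g=u^{-2}g$ is again a warped product over $N$. I expect the main obstacle to be precisely this last separation and the bookkeeping of the ODEs for $\varphi$, $\alpha$ and $v_N$ — in particular, verifying that in case (1) the underlying metric itself (not merely the SMMS) is Einstein and pinning down the constant $\nu^2-4\lambda\widehat\lambda$ — together with disposing of the degenerate situations where $u$ is constant (so there are no regular points) or $\sigma\equiv0$ (a metric product), and the low-dimensional cases where $N$ is an interval or a surface.
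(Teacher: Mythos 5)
Your route is essentially the paper's: reduce \eqref{eq:conf-trans-law-P2} to the concircular equation $\Hes_u=(\nu-2\lambda u)g$ and the first integral $(u')^2=-2\lambda u^2+2\nu u-2\widehat\lambda$, split $(M,g)$ as $I\times_{u'}N$, derive $v=\varphi v_N+\alpha$ from the vanishing of the mixed component $\Hes_v(\partial_t,X)$, and then separate variables in the fibre block, with the dichotomy governed by whether $\alpha/\varphi$ is constant. All of this matches Lemmas~\ref{le:weighted-Einstein-conformal-Obata} and \ref{le:conf-2WE-IorN-QE} and the structure of the paper's proof, and your identification of the constant $2\lambda\varphi^2+(\varphi')^2=\nu^2-4\lambda\widehat\lambda$ and the derivation of $\kappa=0$ in the case $v=\varphi v_N$ are correct.

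Two caveats on the part you leave as a sketch. First, the separation itself is where all the work is: the coefficient of $\rho^N$ in the fibre equation is $v=\varphi v_N+\alpha$, which mixes the $t$- and fibre-variables, so one cannot simply read off ``the coefficient of $\rho^N$ varies with $t$ while that of $\Hes^N_{v_N}$ does not.'' The paper handles this by evaluating on a Ricci eigenvector $X$ of $N$, reducing to scalar functions $r(X)$ and $h(X)$ on $N$, differentiating in the fibre variables and exploiting the linear independence of $\alpha$ and $\varphi$ as functions of $t$; this is several pages of bookkeeping, not a one-line observation. Second, your appeal to the contracted second Bianchi identity (i.e.\ Schur's lemma) to upgrade $\rho^N=c\,g^N$ to $c$ constant fails when $n=3$, since then $N$ is a surface --- a case you flag but do not dispose of. The paper avoids Schur entirely: the separation forces each Ricci eigenvalue of $N$ to equal the explicit constant $(n-2)(\nu^2-4\lambda\widehat\lambda)$, which gives the Einstein condition on $N$ (and then $\rho=2(n-1)\lambda g$ via the warped-product Ricci formula) in every dimension. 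With the separation carried out along those lines, your plan closes; as written, those two steps are the gap.
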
	

Notice that, as an {\it a posteriori} consequence of Theorem~\ref{th:conf-class-2WE-local}, some of the manifolds of interest to this work have constant scalar curvature. Indeed, they are Einstein (case (1)). However, this does not hold in general for manifolds in case (2), which do have constant weighted scalar curvature (since they are quasi-Einstein), but not constant scalar curvature. In both cases, the corresponding constancy is preserved by the conformal transformation. 

The warped product structure of the SMMSs in Theorem~\ref{th:conf-class-2WE-local} reduces the study of equation~\eqref{eq:conf-trans-law-P2} to that of a system of ODEs involving the conformal factor $u$ and the warping function $\varphi$. Further study of the weighted Einstein equation for these structures provides information on the density function $f$ and the curvature parameter $\mu$ (see Section~\ref{sec:2confWE-local}). Moreover, we will see in Remark~\ref{re:split-v-eins-2WE} that the density function itself satisfies an Obata equation in the Einstein case, thus giving rise to an alternative splitting of the manifold. 

Secondly, we focus on global rigidity results for SMMSs with complete underlying manifold $(M,g)$. In this context, the admissible geometries with two weighted Einstein representatives of the same weighted conformal class are either weighted space forms (see definitions in Section~\ref{sect:preliminaries}) or special families of warped products. 

\begin{theorem}\label{th:conf-class-2WE-complete}
	Let $(M^n,g,f,m,\mu)$ be a complete SMMS such that $P_f^m=\lambda g$, with scale $\kappa$, and such that there exists a conformally equivalent weighted Einstein SMMS. Then, $(M,g,f,m,\mu)$ is isometric to one of the following SMMSs:
	\begin{enumerate}
		\item A weighted space form as described in Examples~\ref{ex:sphere}, \ref{ex:euclidean-space} and \ref{ex:hyperbolic-space}.
		\item  A warped product $\mathbb{R}\times_\varphi N$, with $N$ complete, and such that $\varphi(t)=A e^{t\sqrt{-2\lambda}}$, where $t$ parameterizes $\mathbb{R}$ by arc length. Moreover, $\lambda<0$ and one of the following holds:
		\begin{enumerate}
			\item $(M,g)$ is Einstein and  $(N,g^N)$ is Ricci-flat. The density function has the form 
			\[
			 f=-m\log\left(\frac{\kappa}{2\lambda}+Be^{t\sqrt{-2\lambda}}\right),
			\]
		for some $B\geq0$ and $\kappa\leq 0$. Moreover, $m=1$ or $\mu=-\frac{\kappa ^2}{2 \lambda }\geq0$.
			\item $(M,g,f,m)$ is quasi-Einstein, $f$ splits as $f=-m \log{\varphi}+f_N$, and $(N,g^N,f_N,m)$ is  also quasi-Eins\-tein with $(\rho^m_{f_N})^N=0$.
			\end{enumerate}
	\end{enumerate}
\end{theorem}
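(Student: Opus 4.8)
The plan is to upgrade the local description of Theorem~\ref{th:conf-class-2WE-local} to a global one by exploiting completeness. Writing $u=e^{\phi/m}>0$ as in \eqref{eq:conf-trans-law-P}, the hypothesis $P_f^m=\lambda g$ turns \eqref{eq:conf-trans-law-P2} into the concircular (Obata-type) equation
\[
\Hes_u=\psi\,g,\qquad \psi:=u^{-1}\Big(\widehat\lambda+\tfrac12\|\nabla u\|^2\Big)-\lambda u ,
\]
valid on all of $M$. Since $\nabla\|\nabla u\|^2=2\psi\,\nabla u$, the norm $\|\nabla u\|^2$ is a function of $u$ on the regular set; substituting $\psi$ and integrating the resulting linear first-order ODE yields the first integral
\[
\|\nabla u\|^2=Q(u):=-2\lambda u^2+Cu-2\widehat\lambda
\]
for a constant $C$, which by continuity persists across the critical set. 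Hence the critical points of $u$ are precisely the level sets $\{u=c\}$ with $Q(c)=0$, and $Q$ is a polynomial of degree at most two.

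I would then invoke the classical structure theory for complete manifolds carrying a non-constant concircular function (Tashiro--Kanai--Obata): $u$ depends only on the arc-length parameter $t$ along $\nabla u$, the regular set carries the warped product structure of Theorem~\ref{th:conf-class-2WE-local} with warping function proportional to $u'=\pm\sqrt{Q(u)}$, and, according to whether $u$ has two, one, or no critical points, $(M,g)$ is --- after gluing the local pieces across the at most two critical points of $u$ --- a rotationally symmetric metric on a sphere, a rotationally symmetric metric on $\mathbb R^n$, or a warped product $\mathbb R\times_\varphi N$ with $N$ complete. The number of critical points is read off from the roots of $Q$: for $\lambda>0$, $Q$ opens downwards and $u$ has a maximum and a minimum; for $\lambda=0$, $Q$ is affine; for $\lambda<0$, $Q$ opens upwards and the sign of its discriminant $C^2-16\lambda\widehat\lambda$ gives one critical point (two real roots), none with $\varphi$ a pure exponential (double root), or none with $\varphi$ of $\cosh$-type (no real root).

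In every case with a critical point, smoothness of $g$ there forces the fiber to be a round sphere of the appropriate radius; combining this with the Einstein or quasi-Einstein property and the explicit density supplied by Theorem~\ref{th:conf-class-2WE-local}, and fixing $\mu$ and the scale $\kappa$ through Lemma~\ref{lemma:scale} and the ODE system of Section~\ref{sec:2confWE-local}, one recognises $(M,g,f,m,\mu)$ as the weighted sphere, weighted Euclidean space or weighted hyperbolic space of Examples~\ref{ex:sphere}, \ref{ex:euclidean-space} and \ref{ex:hyperbolic-space}. It then remains to treat $\lambda<0$ with $u$ free of critical points. Here the $\cosh$-type alternative must be excluded: feeding completeness of $N$ into the fiber equation $\Hes^N_{v_N}=(\xi-(\nu^2-4\lambda\widehat\lambda)v_N)g^N$ of Theorem~\ref{th:conf-class-2WE-local} (together with Remark~\ref{re:split-v-eins-2WE}) forces a degeneration that reduces it to a space form already listed, or contradicts the presence of a second representative; so $Q$ has a double root and $\varphi(t)=Ae^{t\sqrt{-2\lambda}}$. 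In the Einstein alternative of Theorem~\ref{th:conf-class-2WE-local} the warped-product Einstein equations force $(N,g^N)$ Ricci-flat, the density $f=-m\log(\varphi v_N+\alpha)$ reduces to $f=-m\log\big(\tfrac{\kappa}{2\lambda}+Be^{t\sqrt{-2\lambda}}\big)$ with $B\ge0$ and $\kappa\le0$ (from positivity of $\varphi v_N+\alpha$), and inserting this into \eqref{eq:Weighted-scalar-curvature}--\eqref{eq:weighted-Einstein-equation} leaves either $m=1$ or $\mu=-\kappa^2/(2\lambda)\ge0$; in the quasi-Einstein alternative the splitting $f=-m\log\varphi+f_N$ together with constancy of $J_f^m$ yields $(\rho^m_{f_N})^N=0$.

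The step I expect to be the main obstacle is precisely the critical-point-free case: distinguishing the genuinely new family (pure exponential warping) from the $\cosh$-type warped products that also satisfy the Einstein ODE $\varphi''=-2\lambda\varphi$ requires using the full strength of the existence of a \emph{second} weighted Einstein representative --- the density constraint and the companion Einstein/quasi-Einstein structure on the fiber $N$ --- rather than just the Einstein condition on $(M,g)$, and it must be combined with a careful transfer of completeness and of the smoothness conditions at the ends to $N$. A secondary difficulty is the bookkeeping of the constants $C,\widehat\lambda,\kappa,\mu$ so that the dichotomy $m=1$ versus $\mu=-\kappa^2/(2\lambda)$ and the sign conditions $B\ge0$, $\kappa\le0$ emerge cleanly.
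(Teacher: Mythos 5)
Your overall route coincides with the paper's: derive the concircular equation $\Hes_u=(\nu-2\lambda u)g$ with first integral $\|\nabla u\|^2=-2\lambda u^2+2\nu u-2\widehat\lambda$, invoke the structure theorem for complete manifolds admitting such a function (the paper uses Theorem~\ref{th:generalized-Obata-equation}, from \cite{Wu-Ye-Obata}), and split according to whether $u$ has critical points. The critical-point cases and the identification with the weighted space forms are fine. The problem is the critical-point-free case, which you yourself flag as the main obstacle but for which you offer no actual argument: ``feeding completeness of $N$ into the fiber equation \dots forces a degeneration \dots or contradicts the presence of a second representative'' is a placeholder, not a proof. Moreover, the mechanism you point to (fiber completeness plus the fiber Obata equation) is not what does the work. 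The correct exclusion of the $\cosh$-type warping is elementary: with no critical points the base is all of $\mathbb{R}$, and the solutions of $u''+2\lambda u-\nu=0$ with nowhere-vanishing derivative are, for $\lambda<0$, $u=\frac{\nu}{2\lambda}+c_1e^{t\sqrt{-2\lambda}}+c_2e^{-t\sqrt{-2\lambda}}$ with $c_1c_2\le 0$; if $c_1c_2<0$ (which is exactly the case $\varphi=\pm u'$ of $\cosh$-type) then $u\to-\infty$ at one end of $\mathbb{R}$, contradicting $u=e^{\phi/m}>0$. Hence $c_1c_2=0$, $\varphi$ is a pure exponential, and $2\lambda\varphi^2+(\varphi')^2=\nu^2-4\lambda\widehat\lambda=0$ follows. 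The same global positivity of $u$ and $\varphi$ is what rules out critical-point-free solutions for $\lambda\ge 0$, a case your sketch passes over silently.

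There is a second gap in case (2)(a): you claim that positivity of $v=\varphi v_N+\alpha$ alone reduces the density to $\frac{\kappa}{2\lambda}+Be^{t\sqrt{-2\lambda}}$. It does not; a priori $\alpha=\frac{\kappa}{2\lambda}+Be^{t\sqrt{-2\lambda}}+Ce^{-t\sqrt{-2\lambda}}$ and $v_N$ need not be constant. The paper disposes of this via the fiber Obata equation, which here becomes $\Hes^N_{v_N}=-4AC\lambda\,g^N$: if $C>0$, completeness forces $N\cong\mathbb{R}^{n-1}$ with $v_N$ quadratic and the manifold is the weighted hyperbolic space, i.e.\ it falls back into item (1); only when $C=0$ (whence $v_N$ is constant, now indeed by positivity of $v$) does one land in (2)(a). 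Your sketch conflates these two outcomes. The remaining steps you outline (Ricci-flatness of $N$ from $2\lambda\varphi^2+(\varphi')^2=0$, the dichotomy $m=1$ or $\mu=-\kappa^2/(2\lambda)$, and the quasi-Einstein case (2)(b)) agree with the paper.
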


\begin{remark}
Manifolds in Theorem~\ref{th:conf-class-2WE-complete}~(2)(a) are geometrically significant from several points of view. Under this condition, $(N,g^N)$ is a Ricci-flat complete manifold and $(M,g)$ is Einstein with $\rho=2(n-1)\lambda g$. Hence, this manifold falls into Theorem~\ref{th:conf-class-2WE-local}~(1) alongside the weighted space forms in Theorem~\ref{th:conf-class-2WE-complete}~(1). Furthermore,the complete manifolds in Theorem~\ref{th:conf-class-2WE-complete}~(1) and (2)(a) are precisely those that admit a non-homothetic conformal change into another Einstein manifold (see \cite[Theorem~27]{Kuhnel}). Consequently, for trivial SMMSs, i.e., those with constant density function, we recover this well-known result. Strikingly, the non-trivial SMMSs in these families correspond exactly with those which are weighted Einstein and have harmonic weighted Weyl tensor, as shown in \cite{Brozos-Mojon-JMPA} (see Remark~\ref{re:conf-2WE-local-classification} and Section~\ref{sect:2WEconf-complete} for details). 
\end{remark}

As a consequence of Theorem~\ref{th:conf-class-2WE-complete}, we obtain a rigidity result for compact SMMSs that generalizes the result given in \cite{Case-JDG} for $\mu=0$. Moreover, from the point of view that smooth metric measure spaces generalize manifolds with constant density, we can say that it also extends \cite[Corollary 23]{Kuhnel} to the weighted setting.

\begin{corollary}\label{cor:2WEconf-compact}
	Let $(M^n,g,f,m,\mu)$ be a non-trivial compact weighted Einstein SMMS. If there exists a non-constant conformal factor such that the transformed manifold is weighted Einstein, then $(M,g,f,m,\mu)$ is an $m$-weighted $n$-sphere (which is conformally equivalent to a standard sphere with vanishing density). 
\end{corollary}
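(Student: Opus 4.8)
The plan is to derive the corollary directly from the global classification in Theorem~\ref{th:conf-class-2WE-complete}, discarding every model on that list except the weighted sphere by a purely topological argument. First I would note that a compact manifold is complete, so the hypotheses of Theorem~\ref{th:conf-class-2WE-complete} are met: $(M,g,f,m,\mu)$ is weighted Einstein with $P_f^m=\lambda g$ and some scale $\kappa$, and the existence of a \emph{non-constant} conformal factor producing a weighted Einstein SMMS means the conformal change is not a homothety, so the classification applies in its intended (non-vacuous) form. Thus $(M,g,f,m,\mu)$ is isometric, as an SMMS, either to one of the weighted space forms of Examples~\ref{ex:sphere}, \ref{ex:euclidean-space} and \ref{ex:hyperbolic-space}, or to a warped product $\mathbb{R}\times_\varphi N$ of the type in part~(2) of that theorem.

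Next I would eliminate the non-compact candidates. An SMMS-isometry is in particular a Riemannian isometry, so the underlying manifold $(M,g)$ would have to be isometric to the underlying Riemannian manifold of the model. The warped product $\mathbb{R}\times_\varphi N$ is diffeomorphic to $\mathbb{R}\times N$, hence non-compact because of the $\mathbb{R}$ factor; and the weighted Euclidean and hyperbolic space forms of Examples~\ref{ex:euclidean-space} and \ref{ex:hyperbolic-space} have underlying manifolds $\mathbb{R}^n$ and $\mathbb{H}^n$, which are non-compact. Since $(M,g)$ is compact, all of these are excluded, and therefore $(M,g,f,m,\mu)$ is isometric to the weighted sphere of Example~\ref{ex:sphere}.

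It then remains to read off the final assertion. As the SMMS is assumed non-trivial, it is a non-trivial $m$-weighted $n$-sphere, and the description recorded in Example~\ref{ex:sphere} shows that any such weighted sphere is conformally equivalent to the standard round sphere carrying the vanishing density, which completes the argument. I do not expect a substantive obstacle in this corollary: all the analytic content is already packaged in Theorem~\ref{th:conf-class-2WE-complete}, and the only point that needs checking is the (immediate) topological fact that the warped-product model and the flat and hyperbolic weighted space forms are never compact.
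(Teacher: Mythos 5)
Your main argument coincides with the paper's: compactness of $M$ rules out the warped products $\mathbb{R}\times_\varphi N$ of Theorem~\ref{th:conf-class-2WE-complete}~(2) and the Euclidean and hyperbolic weighted space forms, leaving only the $m$-weighted $n$-sphere of Example~\ref{ex:sphere}. That part is correct and is exactly how the paper begins.

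The one place you are too quick is the final parenthetical claim. Example~\ref{ex:sphere} only records the data of the weighted sphere (the round metric, the density $v(t)=A+B\cos(\sqrt{2\lambda}\,t)$ with $A>|B|$, the value of $\mu$ and the scale $\kappa=2\lambda A$); it does not state, and therefore cannot be cited for, the assertion that this SMMS is conformally equivalent to a standard sphere with vanishing density. That assertion still needs an argument, which the paper supplies: choose the conformal factor $u=v$, so that the transformed density is $\widehat f=f+m\log u=-m\log v+m\log v=0$, and then verify that $\widehat g=v^{-2}g_{\mathbb{S}}^{2\lambda}$ has constant sectional curvature $2\widehat\lambda=2(A^2-B^2)\lambda>0$ (this follows from the fact that $v$ satisfies the concircularity equation $\Hes_v=(2\lambda v-\kappa)g$ on the round sphere, together with the classical transformation law for the curvature under such a conformal change, and uses $A>|B|$ to get positivity). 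Without some version of this computation the "conformally equivalent to a standard sphere with vanishing density" part of the statement is asserted rather than proved; with it, your proof is complete and matches the paper's.
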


The outline of the paper is as follows. In Section~\ref{sect:preliminaries}, we go over some preliminaries on the  geometry of weighted Einstein manifolds and weighted conformal classes for SMMSs. We also discuss the analyticity of the conformal factor. 

In Section~\ref{sec:2confWE-local}, we perform a local analysis of the geometric structure of weighted Einstein manifolds which admit another weighted Einstein structure in their conformal class. From the transformation formula \eqref{eq:conf-trans-law-P}, we prove the splitting of these SMMSs as warped product with one-dimensional base. Further study of the transformation formula and the resulting weighted Einstein equations allows us to prove Theorem~\ref{th:conf-class-2WE-local}. Additionally, we provide some remarks and examples of SMMSs that give more insight into the types of geometries that arise for the different items of the theorem.

Finally, in Section~\ref{sect:2WEconf-complete}, we prove Theorem~\ref{th:conf-class-2WE-complete}, determining the complete manifolds with several weighted Einstein structures in the same conformal class. We end with the proof of  Corollary~\ref{cor:2WEconf-compact} and the analysis of the compact case.

\section{Preliminaries: weighted Einstein condition and conformal classes}\label{sect:preliminaries}
Throughout this section, let $(M,g)$ be an $n$-dimensional Riemannian manifold, with $n\geq 3$, and $f\in C^\infty(M)$. Also, let $m\in \mathbb{R}^+$ be a dimensional parameter and $\mu\in \mathbb{R}$ an auxiliary curvature parameter, so that we can define the smooth metric measure space $\mathcal{M}=(M,g,f,m,\mu)$. 


\subsection{Weighted Einstein SMMSs}

This notion was coined by J. Case (see \cite{Case-JDG,Case-Sigmak}), who described a SMMS $\mathcal{M}=(M,g,f,m,\mu)$ as \emph{weighted Einstein} if $P^m_f=\lambda g$ for some $\lambda\in\mathbb{R}$. The motivation for this definition comes from the fact that weighted Einstein manifolds with $\mu=0$ are critical points of the total weighted scalar curvature functional, which arises in the study of the weighted Yamabe problem (see \cite{Case-JDG}). Moreover, Case proved several results describing weighted Einstein manifolds as critical points of functionals related to the weighted $\sigma_k$-curvature (we refer to \cite{Case-adv-2016,Case-Sigmak} for details on the weighted $\sigma_k$-curvature and the behavior of weighted Einstein SMMSs).

A crucial fact about weighted Einstein SMMSs is that the Schouten scalar is intimately related to the density function as follows.
\begin{lemma} \cite{Case-JDG,Case-Sigmak} \label{lemma:scale}
	Let  $(M^n,g,f,m,\mu)$ be a weighted Einstein SMMS, i.e. with $P^m_f=\lambda g$ for some $\lambda\in \mathbb{R}$. Then, there is a unique  $\kappa\in \mathbb{R}$ such that $J_f^m=(m+n)\lambda -m\kappa e^{\frac{f}m}$. 
\end{lemma}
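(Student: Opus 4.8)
The plan is to derive the claimed formula for $J_f^m$ directly from the weighted Einstein equation \eqref{eq:weighted-Einstein-equation} by exploiting the algebraic relations among $\rho^m_f$, $\tau^m_f$ and $J^m_f$, together with the contracted second Bianchi identity for the (unweighted) Ricci tensor of $(M,g)$. First I would take the trace of \eqref{eq:weighted-Einstein-equation}: from $P^m_f=\lambda g$ and the definition \eqref{eq:Weighted-Schouten-tensor}, tracing gives $\frac{1}{n+m-2}(\tau^m_f\cdot(\text{something}) - n J^m_f)=n\lambda$; more carefully, $\operatorname{tr}_g\rho^m_f$ is not simply $\tau^m_f$ because of the $\mu$-term and the first-derivative terms, so I would instead rewrite \eqref{eq:weighted-Einstein-equation} as $\rho^m_f=\big((n+m-2)\lambda+J^m_f\big)g$ and contract to express $\operatorname{tr}_g\rho^m_f$ in terms of $\lambda$ and $J^m_f$. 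This yields one linear relation between $J^m_f$ and $\operatorname{tr}_g\rho^m_f$.

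Next I would differentiate the weighted Einstein equation. Applying the divergence (with respect to $g$) to $\rho^m_f=\big((n+m-2)\lambda+J^m_f\big)g$ gives $\operatorname{div}\rho^m_f=dJ^m_f$, since $\lambda$ is constant. On the other hand, one has an intrinsic weighted Bianchi-type identity: using $\rho^m_f=\rho+\operatorname{Hes}_f-\frac1m df\otimes df$, the contracted second Bianchi identity $\operatorname{div}\rho=\frac12 d\tau$, and the standard commutation formula $\operatorname{div}\operatorname{Hes}_f=d(\Delta f)+\rho(\nabla f,\cdot)$, together with $\operatorname{div}(df\otimes df)=(\Delta f)df+\operatorname{Hes}_f(\nabla f,\cdot)$, one can compute $\operatorname{div}\rho^m_f$ explicitly and compare it with $dJ^m_f$. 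After substituting the expression for $\tau^m_f$ from \eqref{eq:Weighted-scalar-curvature} and using $J^m_f=\frac{1}{2(n+m-1)}\tau^m_f$, the resulting identity should reduce — after the weighted Einstein equation is used to eliminate the Hessian and Ricci terms — to a first-order ODE along integral curves of $\nabla f$, of the form $dJ^m_f = -\big(\text{const}\big)e^{f/m}\,df$ up to a multiplicative constant, i.e. $d\!\left(J^m_f + m\kappa e^{f/m}\right)=0$ for an appropriate $\kappa$; integrating gives $J^m_f=(m+n)\lambda-m\kappa e^{f/m}$ (the constant of integration being fixed to $(m+n)\lambda$ by the trace relation from the first step, or absorbed into the definition of $\kappa$). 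Uniqueness of $\kappa$ is then immediate: if $f$ is non-constant the function $e^{f/m}$ is non-constant, so two representations would force the coefficients to agree; if $f$ is constant the SMMS is trivial and $\kappa$ is determined by $J^m_f=(m+n)\lambda-m\kappa e^{f/m}$ as well, provided one checks consistency with the Einstein case.

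I expect the main obstacle to be the bookkeeping in the divergence computation: one must carefully track the $\mu\,e^{2f/m}$ and $\|\nabla f\|^2$ contributions hidden inside $\tau^m_f$, and verify that all the terms not proportional to $e^{f/m}df$ cancel precisely when the weighted Einstein equation is imposed — this cancellation is exactly what makes the statement true and is the crux of the lemma. An alternative, perhaps cleaner, route is to pass to the warped product $M\times_v F^m(\mu)$: there $\rho^m_f$ and $\tau^m_f$ are literally the Ricci tensor and scalar curvature of an $(n+m)$-dimensional metric restricted to / viewed on the base, and the weighted Einstein condition $P^m_f=\lambda g$ should correspond to a genuine Schouten-type identity upstairs, so the classical fact that "$\operatorname{div}$ of Schouten equals a gradient" transfers down to the base and yields the ODE for $J^m_f$ after accounting for the $O(m)$-symmetry of the fiber. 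Either way, this is the computation already carried out in \cite{Case-JDG,Case-Sigmak}, so I would present the divergence argument as the main line and remark that it can also be seen via the warped-product interpretation; the final integration step and the uniqueness of $\kappa$ are then routine.
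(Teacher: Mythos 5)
The paper does not prove this lemma: it is quoted verbatim from Case's work \cite{Case-JDG,Case-Sigmak}, so there is no in-paper argument to compare yours against. Your strategy (take the divergence of $\rho^m_f=\bigl((n+m-2)\lambda+J^m_f\bigr)g$, use the contracted Bianchi identity together with $\operatorname{div}\operatorname{Hes}_f=d(\Delta f)+\rho(\nabla f,\cdot)$ and $\operatorname{div}(df\otimes df)=(\Delta f)df+\tfrac12 d\|\nabla f\|^2$, then eliminate curvature terms via the trace) is indeed the standard route and it does close up. One concrete ingredient you should make explicit: combining the trace of the weighted Einstein equation with the definition of $\tau^m_f$ gives the pointwise identity $(n+2m-2)J^m_f=n(n+m-2)\lambda+\Delta f-\|\nabla f\|^2+m(m-1)\mu e^{2f/m}$, and it is precisely when you differentiate this and feed it back into the divergence identity that the $\mu e^{2f/m}$ contributions cancel. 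Carrying the computation through, everything collapses to
\[
dJ^m_f=\tfrac1m\bigl(J^m_f-(m+n)\lambda\bigr)\,df,
\qquad\text{i.e.}\qquad
d\Bigl(e^{-f/m}\bigl(J^m_f-(m+n)\lambda\bigr)\Bigr)=0,
\]
which integrates to $J^m_f=(m+n)\lambda-m\kappa e^{f/m}$; uniqueness of $\kappa$ is then pointwise immediate since $e^{f/m}>0$.

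The one place where your outline is structurally off is the description of the final ODE and how the additive constant is pinned down. The equation you obtain is \emph{not} of the form $dJ^m_f=-(\mathrm{const})\,e^{f/m}df$ with a free integration constant to be normalized afterwards: it is the linear inhomogeneous ODE displayed above, in which the coefficient $(m+n)\lambda$ appears explicitly as part of the source term. Consequently your fallback claims — that the constant of integration is ``fixed by the trace relation from the first step'' or can be ``absorbed into the definition of $\kappa$'' — do not work: the trace relation is a pointwise identity and cannot normalize an additive constant, and an additive constant cannot be absorbed into the coefficient of the non-constant function $e^{f/m}$. This is repairable (the correct ODE falls out of the very computation you propose), but as written the proposal asserts rather than verifies the decisive cancellation, which you yourself identify as the crux.
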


The real number $\kappa$ in Lemma~\ref{lemma:scale} is called {\it scale}. Notice that if $\kappa=0$, then $J^m_f=(m+n)\lambda$ and, from \eqref{eq:Weighted-Schouten-tensor}, one has  $\rho^m_f=2(n+m-1)\lambda g$, so the SMMS is quasi-Einstein. Even more, a  non-trivial weighted Einstein SMMS is quasi-Einstein if and only if the scale $\kappa$ vanishes. 

It is worth noting that, on the one hand, if a formal warped product $M\times_v F^m (\mu)$ is Einstein, then the base is necessarily quasi-Einstein. On the other hand, if a manifold $(M,g)$ is quasi-Einstein, then there exists a warped product of the form $M\times_v F^m (\mu)$ which is Einstein \cite{Kim-Kim-Warped}. This illustrates that, in general, the weighted Einstein condition is not inherited from the Einstein character of a corresponding warped product, which instead leads to the quasi-Einstein notion.

\subsection{Conformal classes}
Two SMMSs, $(M^n,g,f,m,\mu)$ and $(M^n,\widehat{g},\widehat{f},m,\mu)$, are said to be {\it  conformally equivalent} if there exists a smooth function $\phi\in C^\infty(M)$ such that $\widehat{g}=e^{-2\phi/m}g$ and $\widehat{f}=f+ \phi$ (see \cite{Case-Sigmak}). The definition is motivated by the two warped products  $(M,g)\times_v F^m (\mu)$ and $(M,\widehat g)\times_{\widehat{v}} F^m (\mu)$  being conformally equivalent as Riemannian manifolds, where $v=e^{-\frac{f}m}$ and $\widehat{v}=e^{-\frac{\widehat{f}}m}$. Thus, we can rephrase the definition saying that two SMMSs are conformally equivalent if there exists a positive $u\in C^\infty(M)$ such that $\widehat{g}=u^{-2}g$ and $\widehat{v}=u^{-1}v$. Notice that the definition can also be stated in local terms if instead of a global function $\phi$ (equivalently, $u$), there is a function defined only on a neighborhood of each point.

  Notice that direct products of the form $F^n(-\mu)\times F^m(\mu)$ are locally conformally flat, and those of the form $F^n(c)\times \mathbb{R}$ are locally conformally flat for any $c$ \cite{yau}. Thus, a SMMS $(M^n,g,f,m,\mu)$ with $m\neq 1$ is {\it locally conformally flat} (in the weighted sense) if it is locally conformally equivalent to $(F^n,h(-\mu),0,m,\mu)$. If $m=1$, $(M^n,g,f,1)$ is {\it locally conformally flat} if it is locally conformally equivalent to $(F^n,h(c),0,1)$ for some value of the sectional curvature $c$.

\subsection{Regularity of solutions}
Suppose that for two locally conformally equivalent SMMSs with $\widehat{g}=u^{-2}g$ and $\widehat{v}=u^{-1}v$, we have $P_f^m=\lambda g$ and $\widehat{P}_f^m=\widehat{\lambda}\widehat{g}$. 
It was shown in \cite{Brozos-Mojon-JMPA} that both the metric and the density function of weighted Einstein SMMSs are real analytic in harmonic coordinates. As a consequence, we will show that the conformal factor $u$ relating both of them is also analytic.

%
%

\begin{lemma}\label{le:u-analytic}
If $u$ is a solution of \eqref{eq:conf-trans-law-P2} on a weighted Einstein SMMS $(M,g,f,m,\mu)$, then $u$ is (real) analytic in harmonic coordinates on $M$.
\end{lemma}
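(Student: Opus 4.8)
The plan is to exploit the fact, recalled in the paragraph preceding the statement, that on a weighted Einstein SMMS the metric $g$ and the density function $f$ (equivalently $v=e^{-f/m}$) are real analytic in harmonic coordinates, together with an elliptic-regularity bootstrap applied to \eqref{eq:conf-trans-law-P2}. First I would rewrite \eqref{eq:conf-trans-law-P2} in a fixed harmonic coordinate chart as a scalar PDE for $u$ alone. Taking the trace of \eqref{eq:conf-trans-law-P2} against $g$ eliminates the unknown constant $\widehat\lambda$ in favor of an equation of the form
\[
u\,\Delta_g u - \frac{n}{2}\,\|\nabla u\|_g^2 = u^2\bigl(J_f^m - n\lambda\bigr),
\]
whose right-hand side is a real-analytic function of the coordinates (since $J_f^m$ is built from $g$, $f$ and their derivatives, all analytic). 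Subtracting the trace part back, \eqref{eq:conf-trans-law-P2} becomes equivalent to the pair consisting of this scalar equation and the trace-free part $u^{-1}(\Hes_u)_0 = -\,(P_f^m)_0$, but for regularity purposes the scalar equation already suffices.

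Next I would run the bootstrap. At a point where $u\neq 0$ we may work on a neighborhood where $u$ stays bounded away from $0$; there the scalar equation reads
\[
\Delta_g u = \frac{n}{2}\,\frac{\|\nabla u\|_g^2}{u} + u\bigl(J_f^m - n\lambda\bigr),
\]
a semilinear second-order elliptic equation $\Delta_g u = F(x,u,\nabla u)$ whose leading operator $\Delta_g$ has real-analytic coefficients and whose nonlinearity $F$ is real analytic in $(x,u,p)$ on the relevant open set (here one uses $u>0$, or at any rate $u\neq0$, to guarantee analyticity of $u\mapsto 1/u$). Since $u$ is a priori $C^\infty$ — being a smooth conformal factor relating two smooth SMMSs — Morrey's analytic-regularity theorem for solutions of elliptic equations with analytic data applies and yields that $u$ is real analytic on that neighborhood. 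As the set $\{u\neq 0\}$ is open and dense (indeed $u$ is nowhere zero, being positive), and the conclusion is local, this gives analyticity of $u$ on all of $M$ in harmonic coordinates; alternatively, one invokes the standard fact that a solution of an analytic elliptic equation which is analytic on an open dense set, and which we already know is smooth, is analytic everywhere once we check the equation is uniformly elliptic (it is, with leading symbol that of $\Delta_g$) — but since $u>0$ this subtlety does not even arise.

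The main obstacle is not conceptual but bookkeeping: one must make sure that after eliminating $\widehat\lambda$ the resulting scalar equation is genuinely of the form $\Delta_g u = F(x,u,\nabla u)$ with $F$ jointly real analytic, which requires knowing that every coefficient that appears — the entries of $g$, the Christoffel symbols, and the function $J_f^m$ — is real analytic in the chosen coordinates, and this is exactly the content of the result of \cite{Brozos-Mojon-JMPA} quoted above. A secondary point to state carefully is that harmonic coordinates for $(M,g)$ exist around every point and that $g$ is analytic in them; then the same coordinates serve for $u$, since analyticity of a function is a chart-independent notion within the analytic atlas determined by harmonic coordinates. Once these are in place, Morrey's theorem (see \cite{Morrey} or the standard references on elliptic regularity) closes the argument immediately.
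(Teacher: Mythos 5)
Your argument is essentially the paper's: under the weighted Einstein hypothesis $P_f^m=\lambda g$, equation \eqref{eq:conf-trans-law-P2} reduces to $\Hes_u=u\left(\widehat\lambda u^{-2}-\lambda+\frac12 u^{-2}\|\nabla u\|^2\right)g$, whose trace is a quasi-linear elliptic scalar equation with real-analytic data in harmonic coordinates (by the analyticity of $g$ and $f$ from \cite{Brozos-Mojon-JMPA}), and analytic elliptic regularity then gives the claim. One bookkeeping slip: tracing does not eliminate $\widehat\lambda$ --- the traced identity is $u\Delta u-\frac{n}{2}\|\nabla u\|^2=n(\widehat\lambda-\lambda u^2)$ rather than your displayed formula --- but since $\widehat\lambda$ is a constant this is harmless and your bootstrap goes through unchanged.
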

\begin{proof}
 Since $(M,g,f,m,\mu)$ is weighted Einstein, we have  $P_f^m=\lambda g$ and \eqref{eq:conf-trans-law-P2} becomes 
	\begin{equation}\label{eq:hessian-conformal-function}
		\Hes_u=u\left(\widehat{\lambda} u^{-2}-\lambda +\frac{1}2 u^{-2}||\nabla u||^2\right)g,
	\end{equation}
	or, taking the trace of this equation,
	\[
	\Delta u+\, \mathrm{l.o.t}=0,
	\]
	where l.o.t. stands for lower order terms. In harmonic coordinates, this geometric equation becomes the quasi-linear second-order PDE $g^{rs}\frac{\partial^2 u}{\partial x^r \partial x^s}+\, \mathrm{l.o.t}=0$, which is elliptic.
	Moreover, since weighted Einstein metrics are (real) analytic in harmonic coordinates (see \cite[Theorem 2.5]{Brozos-Mojon-JMPA}), the equation is of the form $F(u, \partial u,, \partial^2 u)=0$, with $F$ real analytic. It follows that the conformal factor $u$ is real analytic in harmonic coordinates (see, for example, \cite[J.41]{Besse}). 
\end{proof}

As a result of the previous lemma, the set of regular points of $u$ is open and dense in $M$.  We will use this fact in subsequent sections.

\subsection{Weighted space forms}

The weighted analogues of the usual space forms (complete weighted Einstein manifolds realized on the three model spaces) played a key role in the classification of weighted Einstein manifolds with weighted harmonic Weyl tensor in \cite{Brozos-Mojon-JMPA}, and will do so again in this work. We present them as follows.

\begin{example}[$m$-weighted $n$-sphere $(\mathbb{S}^n(2\lambda),g_{\mathbb{S}}^{2\lambda},f_m,m,\mu)$]\rm\label{ex:sphere} 
	Let $(\mathbb{S}^n(2\lambda),g_{\mathbb{S}}^{2\lambda})$ be the $n$-sphere of constant sectional curvature $2\lambda>0$ (equivalently, of radius $\frac{1}{\sqrt{2\lambda}}$), with the standard round metric
	\[
	g_{\mathbb{S}}^{2\lambda}=dt^2+(2\lambda)^{-1}\sin^2(t \sqrt{2\lambda}) g_{\mathbb{S}^{n-1}}, \quad t\in \left(0,\tfrac{\pi}{\sqrt{2\lambda}}\right),
	\]
	 where $t$ denotes the geodesic distance from the pole $N$ of the sphere and $\mathbb{S}^{n-1}$ is the $(n-1)$-sphere of radius 1. This metric extends smoothly to the poles $N$ and $-N$. Take the positive density $v(t)= A+B\cos(\sqrt{2\lambda} t)$ for $A\in \mathbb{R}^+$, $B\in \mathbb{R}$ such that $A>|B|$ and define, by continuity, $v(N)=A+B$ and $v(-N)=A-B$. For $m\neq 1$, fix $\mu=2\lambda(B^2-A^2)$. Then, the SMMSs $(\mathbb{S}^n(2\lambda),g_{\mathbb{S}}^{2\lambda},f_m,m,\mu)$ and $(\mathbb{S}^n(2\lambda),g_{\mathbb{S}}^{2\lambda},f_1,1)$, where $f_m=-m \log v$, are weighted Einstein with $P_f^m=\lambda g$ and scale $\kappa=2\lambda A>0$. Hence, these weighted spheres are only quasi-Einstein in the trivial case, where $\rho_f^m=\rho=2(n-1)\lambda$.
\end{example}

\begin{example}[$m$-weighted $n$-Euclidean space $(\mathbb{R}^n,g_\mathbb{E},f_m,m,\mu)$]\rm\label{ex:euclidean-space} 
	Let $(\mathbb{R}^n,g_\mathbb{E})$ be the standard Euclidean space, whose metric can be written as a warped product as
	\[
	g_\mathbb{E}=dt^2+t^2 g_{\mathbb{S}^{n-1}}, \quad t\in (0,\infty),
	\]
	extending smoothly to $t=0$. Consider the positive density $v(t)= A+Bt^2$ with $A\in \mathbb{R}^+$, $B\in[0,\infty)$. For $m\neq 1$, set the parameter $\mu=-4AB$. The SMMSs $(\mathbb{R}^n,g_\mathbb{E},f_m,m,\mu)$ and $(\mathbb{R}^n,g_\mathbb{E},f_1,1)$ are weighted Einstein with $P_f^m= 0$ and scale $\kappa=2 B\geq 0$, so they are quasi-Einstein only when they are trivial.
\end{example}

\begin{example}[$m$-weighted $n$-hyperbolic space $(\mathbb{H}^n(2\lambda),g_{\mathbb{H}}^{2\lambda},f_m,m,\mu)$]\rm\label{ex:hyperbolic-space} 
	We denote by $(\mathbb{H}^n(2\lambda),g_{\mathbb{H}}^{2\lambda})$ the $n$-hyperbolic space  of constant sectional curvature $2\lambda<0$, with the metric
	\[
	g_{\mathbb{H}}^{2\lambda}=dt^2+(-2\lambda)^{-1}\sinh^2(t  \sqrt{-2\lambda}) g_{\mathbb{S}^{n-1}}, \quad t\in (0,\infty),
	\]
	extending smoothly to $t=0$. Take the positive density $v(t)=A+B\cosh(\sqrt{-2\lambda} t)$, with  $B\in [0,\infty)$, $A\in \mathbb{R}$ such that $A>-B$. Moreover, for $m\neq 1$, fix $\mu=2\lambda (B^2-A^2)$. Then, the SMMSs $(\mathbb{H}^n(2\lambda),g_{\mathbb{H}}^{2\lambda},f_m,m,\mu)$ and $(\mathbb{H}^n(2\lambda),g_{\mathbb{H}}^{2\lambda},f_1,1)$ are weighted Einstein with $P_f^m=\lambda g$ and scale $\kappa=2\lambda A$. Note that the scale can have any sign, depending on the value of $A$ and, in contrast to the two previous models, taking $A=0$ results in a family of non-trivial quasi-Einstein manifolds. 
\end{example}

\section{Conformal weighted Einstein SMMs: local study}\label{sec:2confWE-local}

We adopt notation from previous sections and begin the analysis from a local point of view without further assumptions.
Firstly, we see that the conformal factor satisfies a generalized Obata equation (compare with the proof of  \cite[Proposition 9.5]{Case-JDG}). This is the same equation satisfied by a conformal factor transforming an Einstein metric into another one (cf. \cite{Brinkmann, Kuhnel}) and it provides some information on the structure of the underlying manifold, which decomposes as a warped product. 			
			
\begin{lemma}\label{le:weighted-Einstein-conformal-Obata}
Let $(M^n,g,v,m,\mu)$ be a SMMS such that $P_f^m=\lambda g$ and $u$ a non-constant solution of \eqref{eq:conf-trans-law-P2}.
Then, the function $u$ is a solution of the generalized Obata equation
				\begin{equation}\label{eq:generalized-Obata-equation-conformal}
					\Hes_{u}+\gamma(u)g=0,
				\end{equation}
				with $\gamma(u)=2\lambda u-\nu$, for some constant $\nu\in \mathbb{R}$. Moreover, around any regular point of $u$, $(M,g)$ is locally isometric to a warped product $I\times_\varphi N$, where $I\subset \mathbb{R}$ is an open interval, $\nabla u$ is tangent to $I$, and $\varphi(t)=\pm u'(t)$, where $t$ is a local coordinate parameterizing $I$ by arc length.
\end{lemma}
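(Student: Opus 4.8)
The plan is to derive the generalized Obata equation directly from equation~\eqref{eq:conf-trans-law-P2}, which, since $P_f^m=\lambda g$, simplifies to
\begin{equation*}
	\Hes_u = u\left(\widehat\lambda u^{-2} - \lambda + \tfrac12 u^{-2}\|\nabla u\|^2\right) g,
\end{equation*}
exactly as in the proof of Lemma~\ref{le:u-analytic}. Setting $\gamma(u) = -u\left(\widehat\lambda u^{-2} - \lambda + \tfrac12 u^{-2}\|\nabla u\|^2\right)$, we get $\Hes_u + \gamma(u)g = 0$; the task is to show that $\gamma(u)$ has the claimed form $2\lambda u - \nu$ for a \emph{constant} $\nu$. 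First I would differentiate the Hessian equation and use the standard commutation identity relating $\nabla\Hes_u$ to the curvature: contracting $\nabla_X\Hes_u(Y,Z) - \nabla_Y\Hes_u(X,Z) = -R(X,Y,Z,\nabla u)$ with the equation $\Hes_u = -\gamma(u)g$ forces $X(\gamma(u))\,Y - Y(\gamma(u))\,X$ to be curvature-like, and a trace argument (valid since $n\geq 3$) shows $d\gamma(u)$ is proportional to $du$, i.e. $\nabla\gamma(u) = c\,\nabla u$ for some function $c$; evaluating and using $\Hes_u = -\gamma(u)g$ again pins down $c$. Alternatively, and more cleanly, I would simply compute $\nabla\bigl(\gamma(u) + 2\lambda u\bigr)$ directly: using $\nabla\|\nabla u\|^2 = 2\Hes_u(\nabla u,\cdot) = -2\gamma(u)\,du$ one checks that all terms cancel, so $\gamma(u) + 2\lambda u$ is constant; call it $\nu$. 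This gives $\gamma(u) = 2\lambda u - \nu$, i.e. \eqref{eq:generalized-Obata-equation-conformal}.

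For the warped product structure, I would invoke the classical consequence of the Obata-type equation $\Hes_u + \gamma(u)g = 0$ (as in \cite{Brinkmann, Kuhnel}): near a point $p$ where $du_p \neq 0$, the level sets of $u$ are totally umbilic hypersurfaces and the integral curves of $\nabla u / \|\nabla u\|$ are unit-speed geodesics. Parameterizing by arc length $t$ along these geodesics, one has $u = u(t)$ with $u'(t) = \pm\|\nabla u\|$, and $u'' = -\gamma(u)$ (the $dt^2$ component of the Hessian equation); meanwhile the tangential part gives $\Hes_u|_{\text{level}} = -\gamma(u)\,g|_{\text{level}}$, while the second fundamental form of a level set is $\Hes_u/\|\nabla u\| = -\gamma(u)/u'\cdot g|_{\text{level}}$, showing the level sets are umbilic with mean curvature governed by $u''/u' = (\log|u'|)'$. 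This is precisely the condition for $g$ to take the warped product form $dt^2 + \varphi(t)^2 g^N$ on $I\times N$ with $\varphi(t) = \pm u'(t)$ (up to a constant rescaling of $g^N$ absorbed into $\varphi$), and $\nabla u = u'(t)\,\partial_t$ is tangent to $I$ by construction.

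The main obstacle is the first part: justifying that $\nu$ is genuinely constant rather than merely a function. I expect the direct computation of $\nabla(\gamma(u)+2\lambda u)$ to work, but it requires care with the identity $\Hes_u(\nabla u, X) = \tfrac12 X(\|\nabla u\|^2)$ and bookkeeping of the $u^{-2}$ factors; the cleanest route may actually be the curvature-commutation argument, which is robust and uses only $n\geq 3$ to run the trace step. A secondary point worth stating carefully is that the warped product conclusion is local and only valid on the open set of regular points of $u$ — which by Lemma~\ref{le:u-analytic} is open and dense — and that the sign ambiguity $\varphi = \pm u'$ reflects the orientation of the arc-length parameter along the gradient flow. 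I would also note that one should check $u'$ does not vanish on the interval $I$ chosen around the regular point, which follows from continuity after shrinking $I$.
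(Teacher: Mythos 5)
Your proposal is correct, and it reorganizes the argument in a way that differs from the paper's proof mainly in the order of the two conclusions. The paper first establishes the warped product structure directly from $\Hes_u=u\bigl(\widehat\lambda u^{-2}-\lambda+\tfrac12 u^{-2}\|\nabla u\|^2\bigr)g$ (umbilicity of the level sets gives a twisted product by Ponge--Reckziegel, and the fact that the mean curvature is constant along each leaf upgrades it to a warped product by Hiepko's criterion), and only then derives the Obata form of the equation by integrating the resulting ODE $uu''-\tfrac12(u')^2+\lambda u^2-\widehat\lambda=0$ along the base coordinate, which produces the conserved quantity $(u')^2u^{-1}+2\lambda u+2\widehat\lambda u^{-1}$. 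Your second route does the same computation intrinsically: writing $\Hes_u=-\gamma g$ with $\gamma=-\widehat\lambda u^{-1}+\lambda u-\tfrac12 u^{-1}\|\nabla u\|^2$ and using $\nabla\|\nabla u\|^2=2\Hes_u(\nabla u,\cdot)=-2\gamma\,du$, one checks that $\nabla\bigl(\widehat\lambda u^{-1}+\lambda u+\tfrac12 u^{-1}\|\nabla u\|^2\bigr)=0$ identically (I verified the cancellation), so $\gamma(u)=2\lambda u-\nu$ with $\nu$ constant. This is the same first integral, but your version has two small advantages: it is valid on all of $M$ (since $u>0$ everywhere), so you do not need the paper's separate analyticity argument to extend the Obata equation across the isolated critical points of $u$; and it makes the constancy of $\nu$ a one-line gradient computation rather than an ODE integration inside coordinates. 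Your curvature-commutation alternative also works, and in fact more simply than you suggest: from $R(X,\nabla u)\nabla u=\nabla u(\gamma)X-X(\gamma)\nabla u$ and the antisymmetry $R(X,\nabla u,\nabla u,\nabla u)=0$ one gets $X(\gamma)=0$ for $X\perp\nabla u$ without any trace or the hypothesis $n\geq3$; but it only shows $d\gamma\propto du$, so the explicit form of $\gamma$ still requires the direct computation. For the warped product part you invoke the classical Brinkmann--Kühnel structure theory for solutions of $\Hes_u+\gamma(u)g=0$, which is legitimate; the paper instead spells this out via the twisted-to-warped reduction and then identifies $\varphi=\pm u'$ by comparing $\Hes_u(X,X)=u'\varphi'/\varphi$ with the Obata equation, exactly as you indicate. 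Both treatments correctly note that the decomposition is local and only valid near regular points, and that the fiber metric can be rescaled to normalize $\varphi=\pm u'$.
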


\begin{proof}
Since $P_f^m=\lambda g$, the Hessian of $u$ is given by \eqref{eq:hessian-conformal-function}. Thus, it follows that the level hypersurfaces of $u$ around regular points are totally umbilical. Consequently, in a neighborhood of each regular point, $(M,g)$ splits as a twisted product $I\times_{\tilde{\varphi}} N$, for some function $\tilde{\varphi} $ defined on $I\times N$ (see \cite{Ponge-Twisted}). 
Moreover, let $E_1=\nabla u /||\nabla u||$. By \eqref{eq:hessian-conformal-function}, we have $g(\nabla_XY,E_1)=-\frac{1}{||\nabla u||}\Hes_u(X,Y)=H g(X,Y)$ for $X,Y \in \nabla u^\perp$, where $H=-\frac{u}{||\nabla u||}\left(\widehat{\lambda} u^{-2}-\lambda +\frac{1}2 u^{-2}||\nabla u||^2\right)$. Since $X(H)=0$ for all $X\in \nabla u^\perp$, the mean curvature vector field $H E_1$ is parallel in the normal bundle $\operatorname{span}\{\nabla u\}$, so the leaves of the fiber are spherical. Hence, the twisted product reduces to a warped product $I\times_\varphi N$ for some function $\varphi$ defined on $I$ (see \cite{Hiepko}). 

Let $t$ be a coordinate parameterizing $I$ by arc length. Then, evaluating equation \eqref{eq:hessian-conformal-function} in $(\partial_t,\partial_t)$ yields
\begin{equation}\label{eq:conformal-factor-hessian}
u^{-2}\left(\lambda u^2-\widehat{\lambda}-\frac{1}2(u')^2 +uu''\right)=0.
\end{equation}
Now, note that we can write
\[
	((u')^2u^{-1}+2\lambda u+2\widehat{\lambda}u^{-1})'=u^{-2}(2u''u-(u')^2-2\widehat{\lambda}+2\lambda u^2)u'=0
\]
where the middle expression vanishes by \eqref{eq:conformal-factor-hessian}. Hence $(u')^2u^{-1}+2\lambda u+2\widehat{\lambda}u^{-1}=\nu$ for some constant $\nu \in \mathbb{R}$, which yields $(u')^2=-2\lambda u^2+2\nu u-2\widehat{\lambda}$. Substituting the value of $(u')^2$ into \eqref{eq:hessian-conformal-function} we obtain $\Hes_{u}+(2\lambda u-\nu)g=0$, which is the generalized Obata equation \eqref{eq:generalized-Obata-equation-conformal} around regular points of $u$. Since the critical points of $u$ are isolated by Lemma~\ref{le:u-analytic}, by smoothness, equation~\eqref{eq:generalized-Obata-equation-conformal} extends to $M$. 

Moreover, for any unitary vector field $X\in \partial_t^\perp$, we use the warped product decomposition (see \cite{Oneill}) to compute $\Hes_u(X,X)=\frac{u'\varphi'}{\varphi}$. Hence  \eqref{eq:hessian-conformal-function} and \eqref{eq:conformal-factor-hessian} yield
\[
	\frac{u'\varphi'}{u\varphi}=\frac{\widehat{\lambda}}{u^2}-\lambda +\frac{(u')^2}{2u^2 }=\frac{u''}{u},
\]
from where $u'\varphi'-u''\varphi=0$, and then $\varphi=K u'$ for some $K\in\mathbb{R}$ such that $\varphi>0$ in $I$ (recall that $u$ is non-constant). Rescaling the metric in $N$, we can assume $K=1$ if $u'>0$ and $K=-1$ if $u'<0$.
\end{proof}

%

Notice that, as pointed out in \cite[Proposition 4]{Kuhnel}, conformal changes given by solutions to equation~\eqref{eq:generalized-Obata-equation-conformal} preserve the constancy of the scalar curvature, which is a crucial fact in the study of conformal transformations preserving the Einstein character. However, in the context of this note, manifolds do not have constant scalar curvature in general.

Another essential difference between the usual setting and conformal changes that transform a weighted Einstein SMMS into another is the transformation of the density.  
As a next step we analyze the form of the density function assuming that the manifold decomposes as a warped product according to Lemma~\ref{le:weighted-Einstein-conformal-Obata}.

\begin{lemma}\label{le:conf-2WE-IorN-QE}
Let $(M^n,g,f,m,\mu)$, where $(M,g)=I\times_\varphi N$ and  $P_f^m=\lambda g$, admit a non-constant solution of \eqref{eq:conf-trans-law-P2}. Then $v=e^{-\frac{f}m}$ splits as
\[
v=\varphi(t) v_N(x_1,\dots,x_{n-1}) + \alpha(t)
\]
where $x_1,\dots x_{n-1}$ are coordinates of the fiber $N$.
\end{lemma}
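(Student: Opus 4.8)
The plan is to work in the warped-product coordinates from Lemma~\ref{le:weighted-Einstein-conformal-Obata}, write down the weighted Einstein equation $P_f^m=\lambda g$ componentwise, and read off a decoupled ODE/PDE for $v$. Since $(M,g)=I\times_\varphi N$ with $\varphi=\pm u'$, I would first recall the O'Neill formulas for $\operatorname{Hes}_f$ and $\rho$ on a warped product: for $X,Y$ tangent to $N$ one has $\operatorname{Hes}_f(X,Y) = \operatorname{Hes}^N_f(X,Y) + \frac{\varphi'}{\varphi} f' \, g(X,Y)$ (with $f'=\partial_t f$), while $\operatorname{Hes}_f(\partial_t,\partial_t)=f''$ and $\operatorname{Hes}_f(\partial_t,X) = X(f') - \frac{\varphi'}{\varphi} X(f)$. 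The key point is that the weighted Einstein equation forces $P_f^m$ — and hence $\rho_f^m=\rho+\operatorname{Hes}_f-\frac1m df\otimes df$ — to be pure trace, so in particular its mixed $(\partial_t,X)$ component vanishes.

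First I would extract information from the mixed component. The $(\partial_t, X)$-entries of $\rho$ vanish on a warped product, so the vanishing of $(\rho_f^m)_{tX}$ gives $X(f') - \frac{\varphi'}{\varphi}X(f) - \frac1m f' X(f) = 0$ for all $X$ tangent to $N$. Writing $v=e^{-f/m}$, so that $\partial_t f = -m\,v'/v$ and $X(f) = -m\,X(v)/v$, this equation becomes, after clearing denominators, $X\!\left(v' - \frac{\varphi'}{\varphi} v\right) = 0$ for all fiber directions $X$ — i.e.\ the function $v' - \frac{\varphi'}{\varphi}v$ depends only on $t$. Call it $-\psi(t)$; then $v$ solves the linear first-order ODE in $t$, $v' - \frac{\varphi'}{\varphi}v = -\psi(t)$, whose general solution is $v = \varphi(t)\,c(x) + \alpha(t)$, where $c(x)$ is the integration "constant" (a function on $N$) and $\alpha(t) = -\varphi(t)\int \psi/\varphi^2\, \varphi \,dt$ is a particular solution depending only on $t$. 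Setting $v_N := c$ gives exactly the claimed splitting $v = \varphi v_N + \alpha$. (A quick check using the analyticity of $u$, hence of $\varphi$, and density of regular points, ensures the decomposition is valid on the whole warped-product chart rather than just formally.)

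The step I expect to be the main obstacle is not the mixed component itself but making sure nothing further is needed — in particular, confirming that the fiber directions in the argument are genuine coordinate vector fields on $N$ (so that "$X(\text{function of }t)=0$" really is the conclusion) and that the integration in $t$ produces a function $\alpha$ of $t$ alone rather than something entangled with $N$; this is where one uses that $\varphi$ and $\psi$ depend only on $t$. One should also note the degenerate possibility $v' \equiv \frac{\varphi'}{\varphi} v$ (i.e.\ $\psi\equiv 0$), which is simply the subcase $\alpha\equiv 0$, $v=\varphi v_N$, already covered by the statement. The remaining componentwise consequences of $P_f^m=\lambda g$ — the $(\partial_t,\partial_t)$ and pure-fiber equations — are what feed into Theorem~\ref{th:conf-class-2WE-local}, but they are not needed for this lemma, so I would stop once the first-order ODE has been integrated.
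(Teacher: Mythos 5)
Your proof is correct, and it starts from the same key observation as the paper: the weighted Einstein condition forces the mixed $(\partial_t,X)$-component of $\rho_f^m$ to vanish, which in the variable $v=e^{-f/m}$ reads $\Hes_v(\partial_t,X)=\partial_t X(v)-\frac{\varphi'}{\varphi}X(v)=0$ (your computation passing from the $f$-form to the $v$-form is right: the $\frac1m f'X(f)$ term cancels against the cross term coming from $X(f')$). Where you genuinely diverge is in how this constraint is integrated. The paper integrates fiber-direction by fiber-direction: from $\partial_{x_i}(v)=\varphi\, v_N^i$ it first integrates in $x_1$ to get $v=\varphi\,\tilde v_N^1+\alpha_1(t,x_2,\dots,x_{n-1})$ and then runs an iterative absorption argument to strip the dependence on $x_2,\dots,x_{n-1}$ out of $\alpha_1$; along the way it needs a local case split on whether $X(v)$ vanishes, since it divides by $X(v)$. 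You instead read the same equation as saying that $v'-\frac{\varphi'}{\varphi}v=-\psi$ is a function of $t$ alone, and then solve the resulting linear first-order ODE in $t$ (with the fiber point as a parameter): $(v/\varphi)'=-\psi/\varphi$, so the homogeneous part contributes $\varphi(t)v_N(x)$ and the particular solution $\alpha(t)=-\varphi\int\psi/\varphi\,dt$ depends only on $t$ because $\psi$ and $\varphi$ do. This collapses the paper's page-long iteration into a single integration and avoids dividing by a possibly vanishing quantity; the points you flag as potential obstacles (that $\alpha$ stays a function of $t$ alone, and the degenerate subcase $\psi\equiv0$) are indeed the only things to check, and you handle them correctly.
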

\begin{proof}
By Lemma~\ref{le:weighted-Einstein-conformal-Obata}, we have that $u''+2\lambda u-\nu=0$ and $\varphi=\pm u'$. Hence $\frac{\varphi''}{\varphi}=-2\lambda$. Moreover, since $(M,g)=I\times_\varphi N$, the Ricci tensor takes the following form \cite{Oneill}:
\begin{equation}\label{eq:ricci-warped}
\begin{array}{rcl}
	\rho(\partial_t,\partial_t)&=&-(n-1)\frac{\varphi''}{\varphi}=2(n-1)\lambda, \qquad \rho(\partial_t,X)=0,  \\
	\noalign{\medskip}	
	\rho(X,Y)&=&\rho^N(X,Y)-\left(\frac{\varphi''}{\varphi}+(n-2)\frac{(\varphi')^2}{\varphi^2}\right)g(X,Y),
\end{array}
\end{equation}
for $X,Y\in \partial_t^\perp$ and $t$ parameterizing $I$ by arc length. Now, using the scale equation given by Lemma~\ref{lemma:scale}, the weighted Einstein equation~\eqref{eq:weighted-Einstein-equation} can be written as
\begin{equation}\label{eq:BE-Ricci-warped}
	\rho+\Hes_f-\frac1m df\otimes df=\left(2(m+n-1)\lambda-m\kappa e^{\frac{f}{m}}\right)g.
\end{equation}		
Under the change of variable $v=e^{-\frac{f}{m}}$, equation \eqref{eq:BE-Ricci-warped} takes the form
\begin{equation}\label{eq:BE-Ricci-warped2}
	\rho-mv^{-1}\Hes_v=(2(m+n-1)\lambda-m\kappa v^{-1})g.
\end{equation}	
Thus, taking $X$ the lift of a vector field in $N$ we have $-mv^{-1}\Hes_v(\partial_t,X)=0$, and computing this Hessian yields 
\[
0=\Hes_v(\partial_t,X)= \partial_tX(v)-\frac{\varphi'}{\varphi} X(v).
\]
 Thus,  locally either $X(v)=0$ or
\[
 \frac{\partial_t X(v)}{X(v)}=\frac{\varphi'}{\varphi} \text{ \; which implies } \partial_t \log(X(v))=(\log \varphi)',
\]
and integrating with respect to $t$ we have  
\[
\log X(v)= \log \varphi + \bar{v}^X_N(\overrightarrow{x})
\]
where  $\overrightarrow{x}=(x_1,\dots,x_{n-1})$ are coordinates on $N$, so $X(v)= \varphi (t) v^X_N(\overrightarrow{x})$ where $v^X_N$ does not depend on $t$, but may depend on the choice of $X$. Since $\varphi$ depends only on $t$ and there is no confusion, we skip it to simplify notation henceforth. Set notation $v_N^{\partial_{x_i}}=v_N^i$ and take $X=\partial_{x_1}$, so $\partial_{x_1}(v)=\varphi\, v^1_N(\overrightarrow{x}) $. Integrating this expression with respect to $x_1$ yields
\begin{equation}\label{eq:v1alpha1-2WEconf}
	 v= \varphi\, \tilde v^1_N(\overrightarrow{x}) + \alpha_1 (t,x_2,\dots, x_{n-1})
\end{equation}
for some function $\tilde v^1_N(\overrightarrow{x})$ on $N$ and some function $\alpha_1$ which does not depend on $x_1$. Now, we work with this expression to  see that the form in \eqref{eq:v1alpha1-2WEconf} can be rearranged so that $\alpha_1$ does not depend on $x_2$. We differentiate this expression with respect to $\partial_{x_2}$ to find
\[
\varphi\,v^2_N(\overrightarrow{x}) =\partial_{x_2}(v)=\varphi\, \partial_{x_2} \tilde v^1_N(\overrightarrow{x}) + \partial_{x_2}\alpha_1 (t,x_2,\dots, x_{n-1}).
\]
Hence,
\[
v^2_N(\overrightarrow{x})- \partial_{x_2} \tilde v^1_N(\overrightarrow{x}) =\varphi^{-1}\,\partial_{x_2}\alpha_1 (t,x_2,\dots, x_{n-1}).
\]
Since the left-hand side of this equation does not depend on $t$, differentiating with respect to $t$ gives 
\[
	0=\varphi^{-2}(\varphi\, \partial_t \partial_{x_2}\alpha_1 -\varphi' \partial_{x_2}\alpha_1),
\]
from where it follows that either $\alpha_1$ does not depend on $x_2$, and thus we attain our objective, or	$\frac{\partial_t \partial_{x_2}\alpha_1}{\partial_{x_2}\alpha_1 } =\frac{\varphi'}{\varphi}$. Assuming that the latter holds, we integrate with respect to $t$ to find
\[
\partial_{x_2}\alpha_1 (t,x_2,\dots, x_{n-1})=\varphi\, \gamma_1(x_2,\dots,x_{n-1})
\]
for some function $\gamma_1$ on $N$. Hence, we can write
\[
\begin{array}{rcl}
\alpha_1 (t,x_2,\dots, x_{n-1})&=&\varphi \int \gamma_1(x_2,\dots,x_{n-1})dx_2+ \alpha_2(t,x_3,\dots,x_{n-1})\\
\noalign{\medskip}
&=&\varphi\, \tilde\gamma_1(x_2,\dots,x_{n-1})+ \alpha_2(t,x_3,\dots,x_{n-1})
\end{array}
\]
for some $\tilde{\gamma}_1$ defined on $N$ and $\alpha_2$ not depending on $x_1$ and $x_2$. Substituting this value of $\alpha_1$ into \eqref{eq:v1alpha1-2WEconf}, we have
\[
\begin{array}{rcl}
 v&=& \varphi\, \tilde v^1_N(\overrightarrow{x}) + \alpha_1 (t,x_2,\dots, x_{n-1})\\
 \noalign{\medskip}
 &=&\varphi\, (\tilde v^1_N(\overrightarrow{x}) + \tilde\gamma_1(x_2,\dots,x_{n-1}))+ \alpha_2(t,x_3,\dots,x_{n-1})\\
 \noalign{\medskip}
 &=&\varphi\, \tilde v^2_N(\overrightarrow{x}) + \alpha_2(t,x_3,\dots,x_{n-1}), 
\end{array}
\]
where $\tilde v_N^2(\overrightarrow x) =\tilde v^1_N(\overrightarrow{x}) + \tilde\gamma_1(x_2,\dots,x_{n-1})$. Thus, we get that $v$ is of the form of \eqref{eq:v1alpha1-2WEconf} with $\alpha$ not depending on $x_1$ and $x_2$. Now, using $\partial_{x_i}(v)=v^i_N(\overrightarrow{x}) \varphi(t)$ for $i=3,\dots,n-1$, we repeat the process outlined above, eliminating the dependence on the $x_i$ variable from the corresponding $\alpha_i$ function. After a number of iterations, $v$ becomes
\[
v=\varphi\, v_N(x_1,\dots,x_{n-1}) + \alpha(t)
\]
for a function $v_N$ defined on the fiber and a function $\alpha$ defined on the base.
\end{proof}

For the form of the density function given by Lemma~\ref{le:conf-2WE-IorN-QE}, the remaining components of the weighted Einstein equation \eqref{eq:BE-Ricci-warped2} provide additional information on the density and the geometry of the underlying manifold, allowing us to prove the local classification result.

\bigskip


\noindent {\it Proof of Theorem~\ref{th:conf-class-2WE-local}}: Let $(M^n,g,f,m,\mu)$ be a SMMS such that $P_f^m=\lambda g$ and such that there exists a SMMS $(M^n,\widehat{g},\widehat{f},m,\mu)$, with $\widehat{g}=u^{-2}g$, $\widehat{v}=u^{-1}v$ (where $v=e^{-\frac fm}$ and $u$ is non-constant) and such that $\widehat{P}_f^m=\widehat{\lambda} \widehat{g}$. By Lemma~\ref{le:weighted-Einstein-conformal-Obata}, $(M,g)$ splits locally around the regular points of the conformal factor $u$ as a warped product $I\times_\varphi N$. Moreover, by Lemma~\ref{le:conf-2WE-IorN-QE}, the density takes the form $v=\varphi v_N+\alpha$, where $v_N$ is defined on $N$ and $\alpha$ is defined on $I$.

From  \eqref{eq:ricci-warped} we have $\rho(\partial_t,\partial_t)=2(n-1)\lambda$, so the weighted Einstein equation \eqref{eq:BE-Ricci-warped2} yields
\begin{equation}\label{eq:BE-Ricci-base}
	\rho_f^m(\partial_t,\partial_t)=2(n-1)\lambda-mv^{-1}\Hes_v(\partial_t,\partial_t)=2(m+n-1)\lambda-m\kappa v^{-1}.
\end{equation}
Hence $\Hes_v(\partial_t,\partial_t)=\partial_t^2v=-2\lambda v+\kappa$. Since $v=\varphi v_N+\alpha$, and knowing that $\varphi''=-2\lambda \varphi$, this implies $\alpha''=-2\lambda \alpha+\kappa$. Now, consider the following decomposition:
\[
	\Hes_v= v_N\Hes_{\varphi}+\varphi\Hes_{v_N}+dv_N\otimes d\varphi+d\varphi\otimes dv_N+ \Hes_{\alpha}. 
\]
Take two lifts $X,Y$ of vector fields in $N$. Since $\varphi$ and $\alpha$ are defined on $I$, we have
\[
	\Hes_v(X,Y)= v_N(\varphi')^2 \varphi g^N(X,Y)+\varphi\Hes_{v_N}(X,Y)+ \alpha' \varphi' \varphi g^N(X,Y). 
\]
Using this expression and \eqref{eq:ricci-warped}, equation \eqref{eq:BE-Ricci-warped2} for $X,Y$ reads
\begin{equation}\label{eq:conf-QE-fiber}
\begin{array}{rcl}
	\rho_f^m(X,Y)&=&\rho^N(X,Y)-\left(\varphi''\varphi+(n-2)(\varphi')^2\right) g^N(X,Y) \\
	\noalign{\medskip}	
	&&-mv^{-1}(v_N(\varphi')^2\varphi+\alpha'\varphi'\varphi ) g^N(X,Y) \\
	\noalign{\medskip}	
	&&-mv^{-1}\varphi\Hes_{v_N}(X,Y) \\
	\noalign{\medskip}
	&=&(2(m+n-1)\lambda-m\kappa v^{-1})\varphi^2g^N(X,Y).
\end{array}
\end{equation}
Let $X$ be a unit eigenvector of  the Ricci operator $\Ric^N$, and let $\rho^N(X,X)=g^N(\Ric^N (X),X)=r(X)$ be its associated eigenvalue. Also, denote $\Hes_{v_N}(X,X)=h(X)$ and notice that, by \eqref{eq:conf-QE-fiber}, $X$ is also an eigenvector of the Hessian operator. In order to simplify notation, we skip the $X$ dependence in the following calculations unless explicitly needed. From the formula
\[
\begin{array}{rcl}
h=\Hes_{v_N}(X,X)=g(\nabla_X\nabla v_N,X)&=&\varphi^2g^N(\nabla_X (\varphi^{-2}\nabla^Nv_N),X) \\
	\noalign{\medskip}
	&=&g^N(\nabla^N_X \nabla^Nv_N,X)=\Hes^N_{v_N}(X,X),
\end{array}
\]
it follows that $h$, as well as $r$, is also a function defined on the fiber $N$. Taking $Y=X$ in \eqref{eq:conf-QE-fiber} and multiplying by $v$, we obtain the following equation:
\begin{equation}\label{eq:conf-QE-fiber-hr}
\begin{array}{rcl}
	0&=&v r-v\left(\varphi''\varphi+(n-2)(\varphi')^2\right)-m(v_N(\varphi')^2\varphi+\alpha'\varphi'\varphi )   \\
	\noalign{\medskip}		
	&&-m\varphi h -(2(m+n-1)\lambda v-m\kappa )\varphi^2,
\end{array}
\end{equation}
where $v=\varphi v_N+\alpha$, $\varphi$ and $\alpha$ depend on $t$, and the remaining functions depend only on the coordinates $x_1,\dots, x_{n-1}$ of the fiber $N$. Now, differentiating \eqref{eq:conf-QE-fiber-hr} with respect to any $x_i$ and substituting $\varphi''=-2\lambda \varphi$ yields
\[
	\alpha \partial_{x_i}r+\varphi\left(\partial_{x_i}(v_N r)-m \partial_{x_i}h-(m+n-2)(2\lambda \varphi^2+(\varphi')^2) \partial_{x_i}v_N\right)=0.
\]
Note that the expression $2\lambda\varphi^2+(\varphi')^2$ is constant. Indeed, $(2\lambda \varphi^2+(\varphi')^2)'=2 \varphi' (2\lambda \varphi+\varphi'')=0$. Moreover, since $\varphi=\pm u'$, $u''=-2\lambda u+\nu$ and $(u')^2=-2\lambda u^2+2\nu u-2\widehat{\lambda}$ by Lemma~\ref{le:weighted-Einstein-conformal-Obata}, we can write
\begin{equation}\label{eq:const_beta_Eins_QE}
	2\lambda \varphi^2+(\varphi')^2=\nu^2 -4\lambda\widehat{\lambda}.
\end{equation}
Thus, the equation above becomes
\[
	\alpha \partial_{x_i}r+\varphi\left(\partial_{x_i}(v_N r)-m \partial_{x_i}h-(m+n-2)(\nu^2 -4\lambda\widehat{\lambda} )\partial_{x_i}v_N\right)=0.
\]
On a suitable open set, it follows that either $\partial_{x_i}r=0$ or 
\[
	\alpha\varphi^{-1}=-\frac{\partial_{x_i}(v_N r)-m \partial_{x_i}h-(m+n-2)(\nu^2 -4\lambda\widehat{\lambda} )\partial_{x_i}v_N}{\partial_{x_i}r}=A
\]
for some constant $A$, since the left-hand side is a function of $t$ and the expression in the middle is defined on $N$. 
Moreover, if we take $\tilde{v}_N=v_N+A$, we have $v=\varphi \tilde v_N$, so this can be considered as a solution with $\alpha=0$. Hence, we conclude that $\partial_{x_i}r(X)=0$ for all $i=1,\dots, n-1$ and all eigenvectors $X$ (name it Case 1), or $v$ splits as $v=\varphi \tilde v_N$ (name it Case 2). We analyze the two possibilities separately.

{\bf Case 1.} Assume that $\partial_{x_i}r(X)=0$ for all $i=1,\dots, n-1$ and all eigenvectors $X$ on an open set. Then we have
\[
	0=-m \partial_{x_i}h+(r-(m+n-2)(\nu^2 -4\lambda\widehat{\lambda}) )\partial_{x_i}v_N,
\]
which can be integrated and solved for $h$ to find
\begin{equation}\label{eq:hrconst-WEconf}
	h=m^{-1}(r-(m+n-2)(\nu^2-4\lambda \widehat\lambda))v_n-\xi
\end{equation}
for some constant $\xi$. Substituting this value of $h$ into \eqref{eq:conf-QE-fiber-hr}, and using $\varphi''=-2\lambda \varphi$ and \eqref{eq:const_beta_Eins_QE} yields
\begin{equation}\label{eq:rconsthint-WEconf}
	0=\alpha(r-(n-2)(\nu^2-4\lambda\widehat\lambda )-2m\lambda \varphi^2)+m\varphi(\xi+\kappa \varphi-\alpha'\varphi').
\end{equation}
Notice that, since $\alpha=0$ leads to Case 2, we can further assume $\alpha\neq 0$ in this instance.
Dividing by $\varphi$ and differentiating with respect to $t$, we simplify using that $\varphi''=-2\lambda \varphi$ and $\alpha''=-2\lambda \alpha+\kappa$ to obtain
\[
	0=\frac{(r-(n-2)(\nu^2-4\lambda \widehat\lambda))(\varphi \alpha'-\alpha \varphi')}{\varphi^2},
\]
so, locally, either $\varphi \alpha'-\alpha \varphi'$  or $r=(n-2)(\nu^2-4\lambda \widehat\lambda)$. The first case implies $\alpha= C\varphi$ for some $ C\in \mathbb{R}$, so it can be reformulated as $\alpha=0$ again, thus fitting into Case 2 below.

Assume  $r=(n-2)(\nu^2-4\lambda \widehat\lambda)$. Since $r=\rho^N(X,X)$ for an arbitrary Ricci eigenvector, it follows that
\begin{equation}\label{eq:rhoN-Einstein-2WE}
	\rho^N=(n-2)(\nu^2-4\lambda \widehat\lambda)g^N=(n-2)\left(2\lambda+\frac{(\varphi')^2}{\varphi^2}\right)g
\end{equation}
and the fiber $(N,g^N)$ is Einstein. Additionally, by  \eqref{eq:ricci-warped}, we have
\[
	\rho(X,Y)=\rho^N(X,Y)-\left(\frac{\varphi''}{\varphi}+(n-2)\frac{(\varphi')^2}{\varphi^2}\right)g(X,Y)=2(n-1)\lambda g(X,Y),
\]
so the Ricci tensor satisfies $\rho=2(n-1)\lambda g$, i.e., $(M,g)$ is Einstein on an open set.  Moreover,  \eqref{eq:rconsthint-WEconf} becomes
\begin{equation}\label{eq:coef-gamma-2WE-conf}
	m\varphi(\xi+(\kappa-2\lambda \alpha)\varphi-\alpha'\varphi')=0,
\end{equation}
from where $\xi=\alpha' \varphi'-(\kappa-2\lambda \alpha)\varphi$, which is indeed a constant due to the equations $\varphi''=-2\lambda \varphi$ and $\alpha''=-2\lambda \alpha+\kappa$. Moreover, note that this shows that $\xi$ does not depend on the choice of the eigenvalue $X$. Equation \eqref{eq:hrconst-WEconf} now yields that $h(X)=-(\xi+(\nu^2-4\lambda \widehat\lambda) v_N)$ for every eigenvector $X$, so 
 $v_N$ satisfies the generalized Obata equation
\begin{equation}\label{eq:obata-eq-fiber-confWE}
	\Hes^N_{v_N}=-(\xi+(\nu^2-4\lambda \widehat\lambda) v_N) g^N.
\end{equation}

Furthermore, the form of the conformal factor $u$ given by Lemma~\ref{le:weighted-Einstein-conformal-Obata} around its regular points yields $\widehat{\rho}=2(n-1)\widehat{\lambda}\widehat{g}$ (see Remark~\ref{le:conf-trans-rho-rhofm} for details). This corresponds to Theorem~\ref{th:conf-class-2WE-local}~(1) on a suitable open set.

{\bf Case 2.} Consider now solutions with $\alpha=0$, i.e., those whose density is $v=\varphi v_N$. From the fact that $\alpha''=-2\lambda \alpha+\kappa$, it follows that $\kappa=0$, so $(M,g,f,m)$ is quasi-Einstein. Moreover, from \eqref{eq:BE-Ricci-base} we obtain that $\rho_f^m=2(m+n-1)\lambda g$. With this, equation~\eqref{eq:conf-QE-fiber} takes the form
\begin{equation}\label{eq:quasi-Einstein-fiber-2WE}
\begin{array}{rcl}
	\left(\rho^N-m{v_N}^{-1}\Hes_{v_N}\right)(X,Y)&=&(m+n-2)(2\lambda \varphi^2+(\varphi')^2)g^N(X,Y) \\
	\noalign{\medskip}
	&=&(m+n-2)(\nu^2 -4\lambda\widehat{\lambda})g^N(X,Y)
\end{array}
\end{equation}
where we have also used equation~\eqref{eq:const_beta_Eins_QE}. 
The left-hand side in \eqref{eq:quasi-Einstein-fiber-2WE} is the Bakry-\'Emery Ricci tensor $\rho^m_{f_N}$ on $(N,g^N,f_N=-m\log v_N,m)$, so the fiber is also quasi-Einstein. 
Now  the form of $u$ given by Lemma~\ref{le:weighted-Einstein-conformal-Obata} leads to  $\widehat{\rho}_f^m=2(m+n-1)\widehat{\lambda}\widehat{g}$ (see Remark~\ref{le:conf-trans-rho-rhofm} for details), so Theorem~\ref{th:conf-class-2WE-local}~(2) follows on a suitable open set.

Finally, since weighted Einstein manifolds and their densities are real analytic in harmonic coordinates (see \cite[Theorem 2.5]{Brozos-Mojon-JMPA}), and $(M,g)$ and $(M,\widehat g)$ are Einstein (respectively, quasi-Einstein) on an open set, they are Einstein (respectively, quasi-Einstein) everywhere. \qed


\subsection{Remarks and examples}

In this section, we will provide some additional information on the weighted Einstein SMMSs that appear in Theorem~\ref{th:conf-class-2WE-local}. We will also construct some examples of such SMMSs and explicit conformal factors that transform them into conformally equivalent weighted Einstein manifolds.

\begin{remark}\label{le:conf-trans-rho-rhofm}
The conformal factor in Theorem~\ref{th:conf-class-2WE-local} transforms Einstein manifolds into Einstein manifolds (item (1)) and quasi-Einstein manifolds into quasi-Einstein manifolds (item (2)). The fist case corresponds to conformal transformations that have been already described in the literature (see \cite{Kuhnel}), so only the second case needs to be checked. Nevertheless, in order to be self-contained and justify the precise description in Theorem~\ref{th:conf-class-2WE-local} we include the details of both transformations as follows.

The underlying manifolds of SMMSs described in  Theorem~\ref{th:conf-class-2WE-local}~(1) are Einstein with $\rho=2(n-1)\lambda g$. In this case, the conformally transformed manifold $(M,\widehat{g})$ where $\widehat{g}=u^{-2}g$ is also Einstein with $\widehat{\rho}=2(n-1)\widehat{\lambda}\widehat{g}$. Indeed, by the transformation formula for the Ricci tensor (see, for example, \cite{Kuhnel-Rademacher}),
	\[
	\widehat{\rho}=\rho+u^{-2}((n-2)u\Hes_u+(u\Delta u) g-(n-1)\|\nabla u\|^2 g),
	\]
the conformal factor is a concircular function ($\Hes_u=(\nu-2\lambda u)g$),
	and since  $(u')^2=-2\lambda u^2+2\nu u-2\widehat{\lambda}$ by Lemma~\ref{le:weighted-Einstein-conformal-Obata}, the value of $\widehat{\rho}$ follows.
	
 For SMMSs in Theorem~\ref{th:conf-class-2WE-local}~(2), $(M,g,f,m)$ is quasi-Einstein with $\rho^m_f=2(m+n-1)\lambda g$. We check that the conformal transformation preserves the quasi-Einstein character as follows. The Bakry-Émery Ricci tensor transforms under a weighted conformal change $\widehat{g}=u^{-2}g$, $\widehat{v}=u^{-1}v$ as (see \cite{Case-QE-SMMS})
	\[
	\widehat{\rho}_f^m=\rho_f^m+(m+n-2)u^{-1}\Hes_u+(u^{-1}(\Delta u-g(\nf,\nabla u))-(m+n-1)u^{-2}\|\nabla u\|^2)g.
	\] 
	Moreover, by Lemmas~\ref{le:weighted-Einstein-conformal-Obata} and \ref{le:conf-2WE-IorN-QE}, and the proof of Theorem~\ref{th:conf-class-2WE-local}, we have  $\varphi=\pm u'$, $u''=\nu-2\lambda u$ and $f=-m\log(\varphi v_N)$. Hence,  $g(\nf,\nabla u)=-m u''=-m(\nu-2\lambda u)$ and substituting these values into the expression above yields
	$\widehat{\rho}_f^m=2(m+n-1)\widehat{\lambda}\widehat{g}$,
	so the transformed manifold is quasi-Einstein. Hence, weighted conformal transformations of manifolds in Theorem~\ref{th:conf-class-2WE-local}~(1) stay in that family, and the same is true for Theorem~\ref{th:conf-class-2WE-local}~(2).
\end{remark}

\begin{remark}\label{re:split-v-eins-2WE}
	For non-trivial manifolds in Theorem~\ref{th:conf-class-2WE-local}~(1), a warped product decomposition similar to that given by Lemma~\ref{le:weighted-Einstein-conformal-Obata} arises with respect to the density $v$. Indeed, since the underlying manifold $(M,g)$ is Einstein  with $\rho=2(n-1)\lambda g$, the weighted Einstein equation \eqref{eq:BE-Ricci-warped2} reads
	\begin{equation}\label{eq:gen-obata-conf-v}
 				\Hes_v=(2\lambda v -\kappa)g,
	\end{equation}
so the arguments from Lemma~\ref{le:weighted-Einstein-conformal-Obata} can be mimicked to split $M$ around regular points of $v$, with the fibers being level hypersurfaces of $v$. This approach has the advantage of $v$ depending only on a coordinate of the base, but in general this will no longer apply to $u$.

Moreover, for the same manifolds in Theorem~\ref{th:conf-class-2WE-local}~(1) and the warped product splitting $I\times_\varphi N$ given in the theorem (i.e. with the fibers being level hypersurfaces of $u$), \eqref{eq:obata-eq-fiber-confWE} is again an Obata-type equation holding on the fiber. Hence, it induces an additional splitting of the form $N=I_2\times_{\varphi_2}N_2$ around regular points of $v_N$, where the fibers are level hypersurfaces of $v_N$. This can be used to extract further information about the geometry of $N$, using known properties of the solutions of the generalized Obata equation (see, for example, \cite{Wu-Ye-Obata}).
\end{remark}

\begin{remark}\label{re:conf-2WE-local-classification} 
Let  $(M^n,g,f,m,\mu)$ be a SMMS in the hypotheses of Theorem~\ref{th:conf-class-2WE-local}~(1), where $M=I\times_\varphi N$ is Einstein with $\rho=2(n-1)\lambda g$ and $N$ is also Einstein with $\rho^N=\beta g^N$. Let $t$ be a coordinate parameterizing $I$ by arc length and assume $v=v(t)$. These SMMSs can be described very explicitly. Indeed, by solving the various ODEs that appear in the proof of the theorem it is straightforward to see that $v$, $\varphi$, $\beta$ and $\mu$ take the following forms (the value of $\mu$ being irrelevant if $m=1$): 

\begin{spacing}{1.5}
\begin{tabular}{|c|c|c|}
	\hline
$\lambda>0$ & 			$\varphi (t)=a \cos(t\sqrt{2\lambda})+b \sin(t\sqrt{2\lambda})$& $\beta=2 (a^2+b^2)(n-2)\lambda$\\
\cline{2-3}
& $v(t)=\frac{\kappa}{2\lambda}-bc\cos(t\sqrt{2\lambda})+ac\sin(t\sqrt{2\lambda})$ & $\mu=2 (a^2+b^2)c^2\lambda-\frac{\kappa^2}{2\lambda}$\\
\cline{2-3}
&  $u(t)=\frac{\nu}{2\lambda}  \mp \frac{b}{\sqrt{2\lambda}}\cos(t\sqrt{2\lambda})  \pm\frac{a}{\sqrt{2\lambda}}\sin(t\sqrt{2\lambda})$ & $\widehat{\lambda}=\frac{\nu^2}{4\lambda}- \frac{a^2+b^2}2$\\
\hline
 $\lambda=0$ & $\varphi (t)= a \kappa t+b$ & $\beta=a^2\kappa^2(n-2)$\\
\cline{2-3}
 & $v (t)=\frac{\kappa}{2}t^2+ct+d$ 	where $b=ac$ if $\kappa \neq 0$ & $\mu=c^2-2d\kappa$\\
\cline{2-3}
&$ u(t)=\frac{\nu  t^2}{2}\pm b t+l \qquad (\nu=\pm a\kappa)$& $\widehat{\lambda}=-\frac12b^2+l\nu$ \\
\hline
$\lambda<0$ & $	\varphi (t)=ae^{t\sqrt{-2\lambda}}+be^{-t\sqrt{-2\lambda}}$ & $\beta=8 a b(n-2)\lambda$\\
\cline{2-3}
& $v(t)=\frac{\kappa}{2\lambda}+ace^{t\sqrt{-2\lambda}}-bce^{-t\sqrt{-2\lambda}}$ & $\mu=-8abc^2\lambda-\frac{\kappa^2}{2\lambda}$\\
\cline{2-3}
 & $u(t)=\frac{\nu }{2
	\lambda }\pm  \frac{a }{\sqrt{-2\lambda }}e^{t\sqrt{-2\lambda}}\mp\frac{b }{\sqrt{-2\lambda }}e^{-t\sqrt{-2\lambda}}  $ & $\widehat{\lambda}=\frac{\nu^2}{4\lambda}- 2ab$\\
\hline
\end{tabular} 
\end{spacing}

\medskip

The constants $a,b,c,d$ and the scale $\kappa$ are such that $\varphi$, $v$ and $u$ are positive on~$I$. 

Notice that the density and the conformal factor present qualitatively the same behavior, up to some constant translation and rescaling. This allows for conformal changes between trivial and non-trivial SMMSs by suitably adjusting the constants involved (cf. Remark~\ref{rmk:analysis-change-constants-global}). 

Moreover, observe that in the three cases the value of $\widehat \lambda$ after the conformal change can be any real number. This contrasts with the behavior of global conformal changes (see Theorem~\ref{th:conf-class-2WE-complete} and Corollary~\ref{cor:2WEconf-compact}).
\end{remark}

\begin{remark}\label{re:2WE-conf-harmonic}
Note that the Riemann curvature tensor of an Einstein manifold decomposes as $R=\frac{\tau}{2n(n-1)}g\KN g+W$, where $\KN$ denotes the Kulkarni-Nomizu product. For manifolds in Theorem~\ref{th:conf-class-2WE-local}~(1), $\rho=2(n-1)\lambda g$, so $\tau=2n(n-1)\lambda$ and the curvature tensor satisfies $R=\lambda g\KN g+W$. Additionally, the weighted Weyl tensor is defined as $W_f^m=R-P_f^m\KN g$ and, since $P_f^m=\lambda g$, we have $W=W_f^m=R-\lambda g\KN g$.
	
The warped product splitting $I\times_\varphi N$ with $N$ Einstein guarantees that the Weyl tensor is harmonic, i.e. $\delta W=0$, where $\delta$ denotes divergence. Also, if we consider the alternative warped product splitting discussed in Remark~\ref{re:split-v-eins-2WE} around regular points of  the density $v$, it is straightforward to see that $W_f^m(\nf,-,-,-)=0$. Hence, it follows that the weighted divergence $\delta_fW_f^m=\delta W_f^m-\iota_{\nf} W_f^m$ vanishes too. Non-trivial weighted Einstein SMMSs with weighted harmonic Weyl tensor were classified in \cite{Brozos-Mojon-JMPA} and we point out that non-trivial SMMSs in Theorem~\ref{th:conf-class-2WE-local}~(1) are exactly those that arise in that context in the case where the underlying manifold $(M,g)$ is Einstein (see \cite[Theorem 4.1]{Brozos-Mojon-JMPA}). In particular, weighted space forms are included in this family, but other kinds of SMMSs which are not weighted  locally conformally flat (i.e., SMMSs with $W_f^m\neq 0$) also appear. 
	
\end{remark}

Although many relatively simple weighted Einstein structures are realized on Einstein manifolds (such as the weighted space forms), more involved structures are admissible as well, like those that fall into Theorem~\ref{th:conf-class-2WE-local}~(2), whose underlying manifold is quasi-Einstein. The following is an example constructed from a quasi-Einstein manifold with $\rho^m_f=0$ that was built in \cite[Example 2]{silvafilho-QE}.

\begin{example}\label{ex:non-Einstein-conf-2WE}\rm
	For  any value of $m\neq 1$, consider the direct product $(N,g_0)=(\tilde{N_2}^{n-2}\times \mathbb{R},g^{\tilde{N}}\oplus  ds^2)$, where $\tilde{N}$ is an Einstein manifold with positive Einstein constant $\xi$,  i.e. $\rho^{\widehat N}=\xi g^{\widehat N}$. 
	Define the function $\sigma:C^\infty(N)\rightarrow \mathbb{R}$ by  $\sigma=\frac{m\sqrt{\xi}}{\sqrt{m+n-3}}\pi_\mathbb{R}$, where $\pi_\mathbb{R}$ denotes the projection on $\mathbb{R}$, and consider the conformal metric $g^N=e^{-2\frac{\sigma}{m}}g_0$. Then
	$(N,g^N,\sigma,m)$ is a quasi-Einstein manifold  with $(\rho^m_\sigma)^N=0$ (see \cite{silvafilho-QE}). 
	
	Next, consider the direct product $(M^n,g)=(I\times N, dt^2\oplus g^N)$, where $I\subset \mathbb{R}$ is an open interval, and set $f=\sigma$. 
	Taking $\mu=\frac{\xi}{m-1}$ makes $(M,g,f,m,\mu)$ a weighted Einstein manifold with $\lambda=0$ (and it is also quasi-Einstein with $\rho_f^m=0$). Moreover, let $u=at+b$ for constants $a\neq 0,b$ such that $u>0$ in $I$. Take the weighted conformal change given by $\widehat{g}=u^{-2}g$ and $\widehat{f}=f+m\log\left(u\right)$. Then, $(M,\widehat{g},\widehat{f},m,\mu)$ is weighted Einstein with $\widehat{\lambda}=-\frac{a^2}{2}<0$, and also quasi-Einstein with $\widehat{\rho}_f^m=-(m+n-1)a^2\widehat{g}$. Note that neither $(M,g)$ nor $(M,\widehat{g})$ are Einstein. Indeed, $\tau=\frac{m(n-2)}{m+n-3}\xi e^{2\frac{f}m}$ and $\widehat{\tau}=u^2\tau-n(n-1)a^2$ are non-constant.
\end{example}

\section{The complete case}\label{sect:2WEconf-complete}

A key point in our analysis of SMMSs with two weighted Einstein structures in the same weighted conformal class is that the conformal factor satisfies the generalized Obata equation \eqref{eq:generalized-Obata-equation-conformal}, but the results stated in the previous section have mostly been centered around the local features of the geometry of the SMMSs of interest. 
Although global solutions to the generalized Obata equation have a long history, dating back to \cite{Brinkmann}, we are going to use a more general result given in \cite{Wu-Ye-Obata}. We summarize it as follows.  


\begin{theorem}\cite[Theorem 4.6]{Wu-Ye-Obata} \label{th:generalized-Obata-equation}
	Let $(M^n,g)$ be a complete Riemannian manifold admitting a non-constant smooth solution $u$ of the generalized Obata equation \eqref{eq:generalized-Obata-equation-conformal} for a smooth function $\gamma$. Then
	\begin{enumerate}
		\item If $u$ has critical points (at most, it can have two), then $(M,g)$ is isometric to a complete extension of the product on $(0,T)\times \mathbb{S}^{n-1}$ with warped metric
		\begin{equation}\label{eq:mfmu}
			g=dt^2+\gamma(\eta)^{-2}(\omega'(t))^2g_{\mathbb{S}^{n-1}},
		\end{equation}
		where $\gamma\in C^\infty(I)$, $I=(a,b)$, $[a,b)$, $(a,b]$ or $[a,b]$ (with $a$, $b$ possibly infinite) and  $\eta\in I$ such that $\gamma(\eta)\neq 0$; and $\omega$ is the unique maximally extended  smooth solution of the initial value problem
		\[
		\omega''+\gamma(\omega)=0, \quad \omega(0)=\eta, \quad \omega'(0)=0.
		\]
		Moreover, $T$ is the (possibly infinite) supremum of $t$ such that $\omega$ is defined on $[0,t]$ and $\omega'\neq 0$ in $(0,t]$; and $u$ is identified with $\omega$ on $M$.
		\item If $u$ does not have critical points, $(M,g)$ is isometric to a warped product $\mathbb{R}\times_\varphi N$, where $N$ is complete and $u$ is defined on the base $\mathbb{R}$.
	\end{enumerate} 
\end{theorem}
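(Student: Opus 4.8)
The approach is to read off the geometry of $(M,g)$ from the gradient flow of $u$. The first observation is that the generalized Obata equation \eqref{eq:generalized-Obata-equation-conformal} makes $\|\nabla u\|^2$ a function of $u$ alone: since $\nabla\|\nabla u\|^2=2\Hes_u(\nabla u,\cdot)=-2\gamma(u)\nabla u$, its gradient is proportional to $\nabla u$, so on the open set $M_0=\{\nabla u\neq 0\}$ one has $\|\nabla u\|^2=\Phi(u)$ for a smooth $\Phi$, and applying $\nabla u$ again gives $\Phi'=-2\gamma$. The same identity $\nabla_{\nabla u}\nabla u=-\gamma(u)\nabla u$ shows that the integral curves of $\nabla u$ are pregeodesics; reparametrizing such a curve through a regular point by arc length produces a unit-speed geodesic $\sigma$ along which $\omega:=u\circ\sigma$ satisfies $\omega''+\gamma(\omega)=0$ and $(\omega')^2=\Phi(\omega)$. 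This is exactly the initial value problem appearing in the statement.

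Next I would establish the warped-product structure on $M_0$. The level hypersurfaces of $u$ are totally umbilic — by \eqref{eq:generalized-Obata-equation-conformal} the second fundamental form with respect to $\nabla u/\|\nabla u\|$ equals $-(\gamma(u)/\|\nabla u\|)$ times the induced metric — and, since $u$ and $\|\nabla u\|$ are constant on level sets, so is this (umbilic) shape operator. Flowing the induced metric $h$ along the unit normal and changing parameter to $u$, a computation with the umbilicity relation gives $\partial_u h=\tfrac{\Phi'(u)}{\Phi(u)}\,h$, so $h$ is, for every value of $u$, a fixed multiple of the metric on a single reference level set $N$; equivalently, by the twisted-to-warped-product argument already used in the proof of Lemma~\ref{le:weighted-Einstein-conformal-Obata}, $M_0$ splits isometrically as $I\times_\psi N$ with $t$ an arc-length coordinate on $I$, $\nabla u$ tangent to $I$, $u=\omega(t)$, and the warping function $\psi$ a positive constant multiple of $\omega'$. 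Normalizing the fibre metric by the factor $\gamma(\eta)^{-2}$ at a reference level $u=\eta$ with $\gamma(\eta)\neq 0$ puts the metric in the form \eqref{eq:mfmu}.

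Then I would analyze the critical points of $u$. If $\nabla u(p)=0$ then $\Hes_u(p)=-\gamma(u(p))\,g_p$; when $\gamma(u(p))\neq 0$ this is definite, so $p$ is a non-degenerate, isolated local maximum or minimum, while the remaining degenerate possibility is excluded by the behaviour of $\Phi$ forced by $(\omega')^2=\Phi(\omega)$ and $\Phi'=-2\gamma$, which would make a critical value only attained asymptotically. Near a non-degenerate critical point $p$ — say a minimum — every geodesic issuing from $p$ carries the same solution of $\omega''+\gamma(\omega)=0$, $\omega(0)=u(p)$, $\omega'(0)=0$, so $u=\omega\circ d(p,\cdot)$ on a punctured ball and the level sets are geodesic spheres. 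Matching the warped model $dt^2+\psi(t)^2 g_N$ against the normal-coordinate expansion $dt^2+t^2 g_{\mathbb{S}^{n-1}}+O(t^4)$ as $t\to 0$, using $\omega'(t)=-\gamma(u(p))\,t+O(t^2)$, forces $(N,g_N)$ to be the round unit sphere $\mathbb{S}^{n-1}$. In particular $u$ is a function of the distance to $p$, hence it can have at most one further critical point — where $\omega'$ next vanishes — which gives the "at most two" bound.

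Finally I would globalize using completeness. Since $(M,g)$ is complete, the unit-speed gradient geodesics are defined for all time, so $\omega$ is the maximally extended solution of its initial value problem and $(0,T)$ is precisely the maximal interval on which $\omega'\neq 0$ beyond the (possible) pole; the warped model $I\times_\psi N$, together with the smooth cap prescribed at a critical endpoint, is then complete and maps isometrically onto an open dense subset of $M$, hence onto $M$. If $u$ has no critical point, $M_0=M$, $\omega$ is strictly monotone on $\mathbb{R}$, and one gets $\mathbb{R}\times_\varphi N$ with $N$ complete and $\varphi$ a positive constant multiple of $|\omega'|$; if $u$ has a critical point, the cap argument yields the complete extension of $(0,T)\times\mathbb{S}^{n-1}$ with metric \eqref{eq:mfmu}. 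The main obstacle is exactly this last step: one must verify carefully that the locally defined warped structure, the smooth capping at a non-degenerate critical point, and the completeness hypothesis fit together to produce a global isometry with the claimed model — including the edge cases $T=\infty$ and $\omega'$ decaying at an end without vanishing — which is where the bulk of the technical work lies.
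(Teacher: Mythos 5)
The paper offers no proof of this statement: it is quoted directly from Wu and Ye \cite{Wu-Ye-Obata}, so there is no internal argument to compare against. Your proposal is the classical Brinkmann--Obata-type argument (level sets totally umbilic with umbilicity factor constant along them, $\|\nabla u\|^2=\Phi(u)$ with $\Phi'=-2\gamma$, gradient flow lines as unit-speed geodesics carrying $\omega''+\gamma(\omega)=0$, spherical caps at non-degenerate critical points), which is in substance the proof given in the cited reference, and the skeleton is sound.

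Two points deserve comment. First, your exclusion of degenerate critical points ``by the behaviour of $\Phi$'' is the one imprecise step: $\Phi$ need only be known to be smooth on the range of $u$ away from critical values, and arguing that the critical value is ``only attained asymptotically'' presupposes regularity of $\Phi$ at that value. The clean argument is ODE uniqueness: if $\nabla u(p)=0$ and $\gamma(u(p))=0$, then along every unit-speed geodesic $\sigma$ from $p$ the function $\omega=u\circ\sigma$ solves $\omega''+\gamma(\omega)=0$ with $\omega(0)=u(p)$, $\omega'(0)=0$, a problem also solved by the constant function; since $\gamma$ is smooth, uniqueness forces $u\equiv u(p)$ on a ball about $p$, and the set of such degenerate points is then open and closed, contradicting non-constancy of $u$. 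Second, the globalization --- showing that the local warped structure, the smooth cap at a non-degenerate critical point, and completeness assemble into a global isometry with the stated model, including the cases $T=\infty$ and an end where $\omega'$ decays without vanishing --- is precisely the technical content of \cite[Theorem 4.6]{Wu-Ye-Obata}; you correctly identify it as the crux but do not carry it out, so as it stands your text is a faithful sketch of the standard proof rather than a complete one.
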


\noindent In the proof of Theorem~\ref{th:conf-class-2WE-complete} we make use of this result, taking into account that the function $\gamma$ takes the form $\gamma(u)=2\lambda u-\nu$ given in Lemma~\ref{le:weighted-Einstein-conformal-Obata}.

\bigskip

\noindent {\it Proof of Theorem~\ref{th:conf-class-2WE-complete}}:
Let $(M^n,g,f,m,\mu)$ be a complete SMMS with $P_f^m=\lambda g$, such that there exists a SMMS $(M^n,\widehat{g},\widehat{f},m,\mu)$, with $\widehat{P}_f^m=\widehat{\lambda} \widehat{g}$, related by a non-constant conformal factor $u$, i.e. $\widehat{g}=u^{-2}g$ and $\widehat{v}=u^{-1}v$.


By Lemma~\ref{le:weighted-Einstein-conformal-Obata}  we know that the (globally defined) conformal factor $u$ satisfies the generalized Obata equation $\Hes_u=(\nu-2\lambda u)g$, which allows us to apply Theorem~\ref{th:generalized-Obata-equation}. There are two possibilities that we analyze separately.   First, consider the case where $u$ has critical points. Then, $(M,g)$ is isometric to 
\[
g=dt^2+\varphi(t)^2 g_{\mathbb{S}^{n-1}}, \quad t\in (0,T),
\] 
with $\varphi(t)=\frac{u'(t)}{(2\lambda u(0)-\nu)}$, where 
\[
u''+2\lambda u-\nu=0, \quad u(0)=\xi>0, \quad u'(0)=0.
\]
If $\lambda>0$, it follows that $u(t)=\frac{1}{2\lambda}\left(\nu+(2 \xi\lambda-\nu)\cos(t\sqrt{2\lambda})\right)$, with $t\in (0,\frac{\pi}{\sqrt{2\lambda}})$. The warping function is $\varphi(t)=\frac{1}{\sqrt{2\lambda }}\sin \left(t\sqrt{2\lambda}\right)$. Hence, $(M,g)$ is isometric to a sphere of constant sectional curvature $2\lambda$, which only admits solutions in the form of the $m$-weighted $n$-spheres as described in Example~\ref{ex:sphere}. 
For $\lambda=0$ and $\lambda<0$ analogous processes lead to the $m$-weighted $n$-Euclidean spaces of Example~\ref{ex:euclidean-space} and to the $m$-weighted $n$-hyperbolic spaces of Example~\ref{ex:hyperbolic-space}. Locally, these spaces look like the SMMSs in Theorem~\ref{th:conf-class-2WE-local}~(1). Moreover, by the argument in Remark~\ref{re:2WE-conf-harmonic}, they have weighted harmonic Weyl tensor, so the classification in \cite{Brozos-Mojon-JMPA} also implies this conclusion in the non-trivial case. These are the SMMS in Theorem~\ref{th:conf-class-2WE-complete}~(1).

Now, we consider the other possibility in Theorem~\ref{th:generalized-Obata-equation}, this is,  $u$ has no critical points, which guarantees that $(M,g)$ is isometric to a warped product of the form $\mathbb{R}\times_\varphi N$ where $\nabla u$ is tangent to $\mathbb{R}$. 
Then we use the fact that the functions $v$, $\varphi$ and $u$ are globally defined. The warping function $\varphi$ and the conformal factor $u$ satisfy the ODEs  $u''+2\lambda u-\nu=0$ and $\varphi=\pm u'$ given in  Lemma~\ref{le:weighted-Einstein-conformal-Obata}, and the only global solutions such that $u$ has no critical points and  both $u$ and $\varphi$ remain positive on $\mathbb{R}$ are  (after an inversion of the sign of $t$ if necessary) of the form $\varphi=Ae^{t\sqrt{-2\lambda}}$ and $u(t)=\frac{\nu}{2\lambda}+\frac{A}{\sqrt{-2\lambda}}e^{t\sqrt{-2\lambda}}$ for $\lambda<0$ and some $A>0$, $\nu\leq 0$. Now, the expression of $\varphi$ yields, from \eqref{eq:const_beta_Eins_QE}, $\nu^2-4\lambda \widehat \lambda=2\lambda \varphi^2+(\varphi')^2=0$. Thus, $\widehat{\lambda}=\frac{\nu^2}{4\lambda}\leq 0$, so $u(t)=\sqrt{\frac{ \widehat{\lambda}}{\lambda}}+\frac{A}{\sqrt{-2\lambda}}e^{t\sqrt{-2\lambda}}$.

By Lemma~\ref{le:conf-2WE-IorN-QE}, the density $v$ takes the form $v=\varphi v_N+\alpha$.  
We distinguish between the cases where $\alpha$ is constant and non-constant, and we analyze each one separately. Notice that, if $\alpha$ is a constant, the form of $v$ reduces to $v=\varphi v_N$ and we can assume $\alpha=0$.

{\bf Case 1.} Assume $\alpha$ is non-constant. First, observe that the SMMS falls into Theorem~\ref{th:conf-class-2WE-local}~(1), so $g$ and $\tilde g$ are Einstein.  From \eqref{eq:rhoN-Einstein-2WE}, since $2\lambda \varphi^2+(\varphi')^2=0$, we obtain $\rho^N=0$, i.e. the fiber $N$ is Ricci-flat.

From $\alpha''=-2\lambda \alpha+\kappa$, it follows that $\alpha(t)=\frac\kappa{2\lambda}+Be^{t\sqrt{-2\lambda}}+Ce^{-t\sqrt{-2\lambda}}$ for some $B,C\geq 0$ such that $v>0$. Moreover, for this form of $\alpha$, equation~\eqref{eq:coef-gamma-2WE-conf} reduces to  $\xi=4AC\lambda$, so the generalized Obata equation \eqref{eq:obata-eq-fiber-confWE} for the fiber takes the form $\Hes_{v_N}=-4AC\lambda g^N$. Hence, either $v_N$ is constant and $C=0$; or $C>0$.
\begin{itemize}
	\item If $v_N$ is constant and $C=0$, redefining the constant $B$ if necessary, we can take $v=\alpha(t)=\frac\kappa{2\lambda}+Be^{t\sqrt{-2\lambda}}$. In order for $v$ to stay positive in $\mathbb{R}$, it follows that $\kappa \leq 0$.
	Moreover, a straightforward calculation from the weighted Einstein equation \eqref{eq:weighted-Einstein-equation} yields that $m=1$ or  $\mu=-\frac{\kappa ^2}{2 \lambda }\geq0$. This corresponds to Theorem~\ref{th:conf-class-2WE-complete}~(2)(a).
	\item  If $C>0$, from equation $\Hes_{v_N}=-4AC\lambda g^N$, it follows that $N$ is isometric to $\mathbb{R}^{n-1}$ and $v_N(r)=-2AC\lambda r^2+D $, where $r$ is the radial coordinate around some point of $\mathbb{R}^{n-1}$ (see  \cite[Theorem 6.3]{Wu-Ye-Obata}). The manifold $\mathbb{R}\times_\varphi \mathbb{R}^{n-1}$ is isometric to the hyperbolic space. In order to see that this case corresponds to Example~\ref{ex:hyperbolic-space}, we keep analyzing the density function. Since $A,C> 0$ and $\lambda<0$, it follows that $v$ remains positive for all values of $r$ if and only if $D\geq0$. 
	Redefining the constant $B$, we can write $v=\frac\kappa{2\lambda}+Be^{t\sqrt{-2\lambda}}+Ce^{-t\sqrt{-2\lambda}}- 2A^2C\lambda r^2 e^{t\sqrt{-2\lambda}}$, 
	so $v$ necessarily has a critical point. Now, applying the splitting given in Remark~\ref{re:split-v-eins-2WE} and knowing that $v$ has critical points, by Theorem~\ref{th:generalized-Obata-equation}, we conclude that this case corresponds to Example~\ref{ex:hyperbolic-space}. Additionally, with this expression for $v$, one checks that these SMMSs satisfy $W=W_f^m=0$, so the conclusion also follows from the classification in \cite{Brozos-Mojon-JMPA}.
\end{itemize} 

{\bf Case 2.} Assume $\alpha=0$. Notice that, in this case, the SMMS falls into Theorem~\ref{th:conf-class-2WE-local}~(2).  Then, equation \eqref{eq:quasi-Einstein-fiber-2WE} guarantees that $(N,g^N,f_N=-m\log v_N,m)$ is quasi-Einstein with $(\rho_{f_N}^m)^N=0$. This yields the quasi-Einstein manifolds in Theorem~~\ref{th:conf-class-2WE-complete}~(2)(b). 
\qed

\bigskip

The underlying manifold of possible solutions in Theorem~\ref{th:conf-class-2WE-complete}~(1) and (2)(a) are Einstein and therefore correspond to Einstein metrics with another Einstein metric in its conformal class as described in \cite{Kuhnel}. Hence, the genuinely new metric structures in Theorem~\ref{th:conf-class-2WE-complete} are those given in Theorem~\ref{th:conf-class-2WE-complete}~(2)(b). In order to build an example of this kind, it suffices to take any non-trivial complete quasi-Einstein manifold with $\rho^m_f=0$ as a fiber. The following one illustrates this fact, using a complete fiber related to the construction  of the {\it generalized Schwarzschild metric} (see \cite[Example 9.118a]{Besse} and \cite{He-warped-Einstein} for more details).

\begin{example}
For $m>1$, consider the two-dimensional manifold $N=\mathbb{R}^2$ endowed with the warped product metric $g^N=dx^2+(\omega'(x))^2d\theta^2$, where $\omega$ is the unique solution on $[0,\infty)$ of the problem
\[
(\omega')^2=1-\omega^{1-m}, \quad \omega(0)=1, \quad \omega \geq 0.
\]
For example, if $m=3$, then $\omega(x)=\sqrt{1+x^2}$. This metric extends smoothly across $x=0$, and the resulting manifold $(N,g^N, -m \log(\omega), m)$ is complete and steady quasi-Einstein, but $\rho^N$ is not, in general, a constant multiple of the metric (indeed, for $m=3$, $\rho^N=\frac{3}{(1+ x^2)^2}g^N$). Thus, for $(\mathbb{R}^3,g=dt^2+\varphi^2g^N)$, $\lambda<0$, $\varphi(t)=e^{\sqrt{-2\lambda}t}$ and $f(t,x,\theta)=-m\left(\sqrt{-2\lambda}\, t+\log(\omega(x))\right)$, the SMMS $(\mathbb{R}^3,g, f, m, \mu)$ is quasi-Einstein with $\rho_f^m=2(m+2)\lambda g$ and thus, weighted Einstein for an appropriate $\mu$ (for $m=3$, the value is $\mu=1$). Moreover, for $\widehat{\lambda}<0$ and the conformal factor $u(t)=\sqrt{\frac{ \widehat{\lambda}}{\lambda}}+\frac{1}{\sqrt{-2\lambda}}e^{\sqrt{-2\lambda}t}$, the conformally transformed SMMS $(\mathbb{R}^3,u^{-2}g, f+m\log(u), m, \mu)$ is also quasi-Einstein with $\widehat{\rho}_f^m=2(m+2)\widehat{\lambda}\widehat{g}$, hence weighted Einstein with $\widehat{P}_f^m=\widehat{\lambda} \widehat{g}$.
\end{example}

%

It was shown in \cite{Kuhnel} that the existence of a solution to the generalized Obata equation forces the manifold to be conformally equivalent to a sphere. In the context of this paper, this relates to Corollary~\ref{cor:2WEconf-compact}, that we obtain as a consequence of Theorem~\ref{th:conf-class-2WE-complete}. 

\medskip

\noindent {\it Proof of Corollary~\ref{cor:2WEconf-compact}}:
Out of the admissible geometries for complete SMMSs given by Theorem~\ref{th:conf-class-2WE-complete}, the only compact ones are the weighted spheres given by Example~\ref{ex:sphere}. Hence, $\lambda>0$ and $(M^n,g,f,m,\mu)$ is globally isometric to such a sphere, so $v(t)=A+B\cos(\sqrt{2\lambda} t)$ for some constants $A\in \mathbb{R}^+$, $B\in \mathbb{R}$ such that $A>|B|$. Now, we can take the conformal factor $u=v$, so that $\widehat f=f+m\log u=0$.

Then, it is straightforward to prove that the conformally transformed SMMS $(M,u^{-2}g,0,m,\mu)$ has constant sectional curvature $\widehat{\lambda}=(A^2-B^2)\lambda>0$, so it is isometric to the sphere $(\mathbb{S}^n(2\widehat \lambda),g_{\mathbb{S}}^{2\widehat{\lambda}},0,m,\mu)$, which is a trivial weighted Einstein manifold with $\widehat{P}_f^m=\widehat{\lambda}g_{\mathbb{S}}^{2\widehat{\lambda}}$.
\qed

\begin{remark}
Note that, according to Example~\ref{ex:sphere}, the curvature parameter $\mu$ for the weighted spheres is given by $\mu=2\lambda(B^2-A^2)$. Thus, the condition $A>|B|$ guarantees that the only weighted spheres with $\mu=0$ are those with $m=1$, where $\mu$ does not play a role. Thus, this recovers the result in \cite{Case-JDG}.
\end{remark}

\begin{example}\rm 	
As a consequence of the fact that the density $v$ satisfies \eqref{eq:gen-obata-conf-v}, the only weighted Einstein structures on standard spheres are the $m$-weighted $n$-spheres portrayed in Example~\ref{ex:sphere}.	However, since the symmetry of the sphere allows for the poles to be any two antipodal points, the conformal factor does not necessarily vary in the same direction as the density, and it can be used to rotate it and modify its radius while maintaining its weighted Einstein character. For example,  let $\lambda>0$ and consider the sphere $(\mathbb{S}^n,g_{\mathbb{S}}^{2\lambda})$, whose metric can be written as
\[
g_{\mathbb{S}}^{2\lambda}=dt^2+(2\lambda)^{-1}\sin^2( \sqrt{2\lambda} t)( d\theta^2+\sin^2( \theta) g_{{\mathbb{S}}^{n-2}}), \quad t\in \left(0,\tfrac{\pi}{\sqrt{2\lambda}}\right), \,\,\theta\in \left(0,\pi\right).
\]
The warping function $\varphi(t)=\frac{\sin(t\sqrt{2\lambda})}{\sqrt{2\lambda}}$ is induced by the conformal transformation given by $u(t)=\frac{\nu}{2\lambda}-\frac{\cos(t\sqrt{2\lambda})}{2\lambda}$, with $\nu>1$, since $\varphi=u'$. Now, instead of the density given in Example~\ref{ex:sphere}, consider $v(t,\theta)=A\varphi(t) \cos(\theta)+B\cos(t\sqrt{2\lambda})+\frac{\kappa}{2\lambda}$ (where, $A,B,\kappa$ are such that $v$ is always positive). The corresponding SMMS $(\mathbb{S}^n,g^{2\lambda},f,m,\mu)$ is weighted Einstein with scale $\kappa$ and $P_f^m=\lambda g$ for $\mu=A^2+2\lambda B^2-\frac{\kappa^2}{2\lambda}$ or $m=1$. Notice that $v$ is of the form given in Lemma~\ref{le:conf-2WE-IorN-QE} with  $\alpha(t)=\frac{\kappa}{2\lambda}+B\cos(t\sqrt{2\lambda})$, so that $\xi=\alpha' \varphi'-(\kappa-2\lambda \alpha)\varphi=0$, where $\xi$ is the constant in the generalized Obata equation~\eqref{eq:obata-eq-fiber-confWE}.

Moreover, the transformed manifold $(\mathbb{S}^n,\widehat g,\widehat f,m,\mu)$, is weighted Einstein with $\widehat P_f^m=\widehat \lambda \widehat g$ and $\widehat \lambda =\frac{\nu^2-1}{4\lambda}>0$. In this case, $v_N(\theta)=A\cos(\theta)$ is indeed a solution of the generalized Obata equation \eqref{eq:obata-eq-fiber-confWE} on $\mathbb{S}^{n-1}$ for $\xi=0$ and $\nu^2-4\lambda \widehat{\lambda}=1$, as stated in Theorem~\ref{th:conf-class-2WE-local}~(1), and the metric $\widehat g$ has constant sectional curvature $2\widehat \lambda $, so $(\mathbb{S}^n,\widehat g,\widehat f,m,\mu)$ is a weighted sphere.
\end{example}

\begin{remark}\label{rmk:analysis-change-constants-global}
The proof of Corollary~\ref{cor:2WEconf-compact} relies on the fact that we can take the conformal factor $u$ to be equal to the density function $v$ so that $\widehat f=f+m\log u=0$. This is possible for most of the other Einstein manifolds in Theorem~\ref{th:conf-class-2WE-complete}. However, the sign of the weighted Einstein constant is not necessarily preserved under such transformations when $\lambda\leq 0$, in constrast to the case $\lambda>0$.

\underline{$\lambda=0$:} For the $m$-weighted $n$-Euclidean space as in Example~\ref{ex:euclidean-space} and the conformal change $u(t)=v(t)=A+Bt^2$, $(M^n, \widehat{g}=u^{-2} g_E ,0,m,\mu)$ is isometric to the punctured sphere $(\mathbb{S}^n\setminus~\{N\},g_S^{2\widehat{\lambda}},0,m,\mu)$ with $\widehat{\lambda}=2AB>0$, which is weighted Einstein with $\widehat{P}_f^m=\widehat{\lambda}g_{\mathbb{S}}^{2\widehat{\lambda}}$. Indeed, note that $\widehat{g}=u^{-2}g$ is essentially the change of the metric given by stereographic projection.

\underline{$\lambda<0$:}
For the $m$-weighted $n$-hyperbolic space of constant sectional curvature $2\lambda$ as in Example~\ref{ex:hyperbolic-space}  and the conformal change given by $u(t)=v(t)=A+B\cosh(\sqrt{2\lambda} t)$, $(M^n,\widehat{g}=u^{-2} g_H^{2\lambda},0,m,\mu)$ is weighted Einstein with $\widehat{\lambda}=(A^2-B^2)\lambda$, and it has constant sectional curvature $2\widehat{\lambda}$. In this case, $\widehat{\lambda}$ can be positive, zero or negative depending on the values of $A$ and $B$ (since $A>-B$), so $(M,\widehat{g})$ can be isometric to a punctured sphere, a Euclidean space or a hyperbolic space.

On the other hand, for a warped product $\mathbb{R}\times_\varphi N$ as in Theorem~\ref{th:conf-class-2WE-complete}~(2)(a), where $u(t)=v(t)=\frac{\kappa}{2\lambda}+\frac{A}{\sqrt{-2\lambda}} e^{t\sqrt{-2\lambda}}$, $N$ is a Ricci-flat complete manifold and $\varphi(t)= A e^{t\sqrt{-2\lambda}}$, we have that for $\widehat{\lambda}=\frac{\kappa^2}{4\lambda}\leq 0$, the transformed SMMS $(M,\widehat{g},0,m,\mu)$ satisfies $\widehat{P}_f^m=\widehat{\lambda} \widehat{g}$.

In contrast to the previous items, in the cases in  Theorem~\ref{th:conf-class-2WE-complete}~(2)(b) where $v_N$ is non-constant, the conformal factor is defined on the base of the warped product $I\times_\varphi N$, while the density has a non-constant component in $N$, so we cannot take $u=v$. Nevertheless, in  Theorem~\ref{th:conf-class-2WE-complete}~(2)(b)  we can take $u=\varphi$, in which case the transformed density $\widehat{f}$ will be constant on the base of the product.
\end{remark}



\begin{thebibliography}{99}



\bibitem{Besse}
A.L. Besse; Einstein manifolds, Ergebnisse der Mathematik und ihrer Grenzgebiete (3), 10, Springer-Verlag, Berlin, 1987.

\bibitem{Brinkmann}
H. W. Brinkmann; Einstein spaces which are mapped conformally on each other, {\it Math. Ann.} {\bf 94}
(1925), 119--145.

\bibitem{Brozos-Mojon-JMPA}
M.~Brozos-Vázquez and D.~Mojón-Álvarez;
Rigidity of weighted Einstein smooth metric measure spaces.
	{\it J. Math. Pures Appl.} {\bf 181} (2024), 91--112.


\bibitem{Case-QE-SMMS}
J. S. Case;
Smooth metric measure spaces and quasi-Einstein metrics, {\it Int. J. Math.} {\bf 23} (10)  (2012), 1250110.


\bibitem{Case-JDG}
J. S. Case; A Yamabe-type problem on smooth metric measure spaces, {\it J. Differerential Geom.}, {\bf 101} (3) (2015), 467--505.

\bibitem{Case-adv-2016}
J. S. Case;
A notion of the weighted $\sigma_k$-curvature for manifolds with density. {\it Adv. Math.} {\bf 295}, (2016) 150--194. 

\bibitem{Case-Sigmak}
J. S. Case;
The weighted $\sigma_k$-curvature of a smooth metric measure space. {\it Pacific. J. Math.} {\bf 299} (2)  (2019), 339--399.

\bibitem{Case-Wei-QE}
J. S. Case, Y. Shu and G. Wei;
Rigidity of quasi-Einstein metrics.
{\it Differ. Geom. Appl.} {\bf 29} (2011), 93–100.


\bibitem{He-warped-Einstein}
C. He, P. Petersen, and W. Wylie;
On the classification of warped product Einstein metrics.
{\it Commun. Anal. Geom} {\bf 20} (2) (2012), 271–311.


\bibitem{Hiepko}
S. Hiepko, Eine innere Kennzeichnung der verzerrten Produkte, {\it Math. Ann.} {\bf 241} (1979), 209--215.

\bibitem{Kim-Kim-Warped}
D.-S. Kim and Y. H. Kim;
Compact Einstein warped product spaces with nonpositive scalar curvature. {\it Proc. Am. Math. Soc.} {\bf 131}, No. 8, (2003) 2573--2576.

\bibitem{Kuhnel}
W. Kühnel; Conformal transformations between Einstein spaces. (English) Zbl 0667.53039
Conformal geometry, Semin., MPI, Bonn/FRG 1985-86, Aspects Math.: E, 12, 105-146 (1988).


\bibitem{Kuhnel-Rademacher}
W. Kühnel, H.-B. Rademacher;
 Conformal transformations of pseudo- Riemannian manifolds. Alekseevsky, Dmitri V. (ed.) et al., 
 Recent developments in pseudo-Riemannian geometry (2008).


%

\bibitem{Oneill}
B. O'Neill;
\emph{Semi-Riemannian geometry. With applications to relativity.} 
Pure and Applied Mathematics, \textbf{103}, 
Academic Press, Inc., New York, 1983.

\bibitem{Ponge-Twisted}
 R. Ponge and H. Reckziegel; 
Twisted products in pseudo-Riemannian geometry, {\it Geom. Dedicata} {\bf 48} (1993), 15–25.

\bibitem{silvafilho-QE}
J.F. Silva Filho;
Quasi-Einstein manifolds endowed with a parallel vector field. 
{\it Monatsh. Math.} {\bf 179} (2016), 305--320.

\bibitem{Wu-Ye-Obata}
G. Wu and R. Ye;
A Note on Obata’s Rigidity Theorem. 
{\it Commun. Math. Stat.} {\bf 2} (2014), 231–252.

\bibitem{Yano-Nagano}
 K. Yano and T. Nagano; Einstein spaces admitting a oneparameter
group of conformal transformations, {\it Ann. Math.}
{\bf 69} (1959), 451--460.

\bibitem{yau}
 S. T. Yau; Remarks on conformal transformations, {\it J. Differential Geom.} {\bf 8} (1973), 369--381.


\end{thebibliography}
\end{document}